\newtheorem{theorem}{Theorem}[section]
\newtheorem{lemma}[theorem]{Lemma}
\theoremstyle{definition}
\newtheorem{definition}[theorem]{Definition}
\theoremstyle{remark}
\newtheorem{remark}[theorem]{Remark}
\numberwithin{equation}{section}
\newcommand{\bs}{\boldsymbol}
\newcommand{\kernel}{\kappa}
\renewcommand{\d}{\operatorname{d}\!}
\newcommand{\spn}{\operatorname{span}}
\newcommand{\iso}{\operatorname{iso}}
\newcommand{\mix}{\operatorname{mix}}
\newcommand{\isomix}{\operatorname{iso-mix}}
\begin{document}
\title[Kernel interpolation in Sobolev spaces of hybrid regularity]
{Kernel interpolation in Sobolev spaces of hybrid regularity}
\author{Michael Griebel}
\address{Michael Griebel,
Institut f\"ur Numerische Simulation,
Universit\"at Bonn, Friedrich-Hirzebruch-Allee 7, 53115 Bonn, Germany
and
Fraunhofer Institute for Algorithms and Scientific Computing (SCAI), 
Schloss Birlinghoven, 53754 Sankt Augustin, Germany
}
\email{griebel@ins.uni-bonn.de}
\author{Helmut Harbrecht}
\address{
Helmut Harbrecht,
Departement Mathematik und Informatik,
Universit\"at Basel, Spiegelgasse 1, 4051 Basel, Switzerland
}
\email{helmut.harbrecht@unibas.ch}
\date{\today}

\subjclass[2020]{Primary 41A46; Secondary 41A63, 46E35}
\keywords{Reproducing kernel Hilbert space, optimized sparse grid, 
error bounds, cost}

\begin{abstract}
Kernel interpolation in tensor product reproducing
kernel Hilbert spaces allows for the use of sparse grids
to mitigate the curse of the dimension. Typically, besides
the generic constant, only a dimension dependent power 
of a logarithm term enters here into complexity estimates. 
We show that optimized 
sparse grids can avoid this logarithmic factor when the 
interpolation error is measured with respect to Sobolev 
spaces of hybrid regularity. Consequently, in such a situation, 
the complexity of kernel interpolation does not suffer 
from the curse of dimension.
\end{abstract}
\maketitle

\section{Introduction}
Kernel interpolation is based on the theoretical 
framework provided by {\em reproducing kernel 
Hilbert spaces} \cite{Aronszajn}, RKHS for short. 
An RKHS is a specific type of Hilbert space of 
functions where every function's 
value at any given point can be {\em reproduced} via 
its inner product with the {\em reproducing kernel}. 
Mathematically, the reproducing kernel is the Riesz 
representer of the point evaluation. This feature 
amounts to a simple and efficient way to obtain
kernel-based approximations that interpolate given 
scattered data. To this end, the representer theorem
\cite{Wahba2} is used, which states that the interpolant 
can be written as a finite linear combination of the kernel 
function evaluated at the data points. Kernel interpolation arises 
in machine learning and scattered data approximation, compare 
\cite{Fasshauer,HSS08,Schaback2,Wendland,Williams,Rasmussen}. 
The quality of this approximation, i.e., the estimate of the 
approximation error, is well established and can be found 
in e.g.~\cite{Fasshauer,Wendland}.

In this article, we aim at the construction of \emph{optimized sparse 
grids} for the dimension robust interpolation with respect to tensor 
product kernels. A fundamental contribution to sparse grid kernel 
interpolation has recently been provided by \cite{Kempf1,Kempf2}. 
While the sparse grid construction therein relies on a multilevel 
approach invoking level dependent correlation lengths of the 
kernel function under consideration, we will use here a kernel 
function of \emph{fixed correlation length} and construct for that 
the sparse grid interpolant. This methodology has already been 
exploited and analyzed in \cite{GHM}, including so-called 
superconvergence, compare \cite{Schaback1,Sloan}. 

Error estimates for kernel interpolation have been established
so far only in classical, isotropic Sobolev spaces or in Sobolev
spaces of dominating mixed derivatives, \cite{GHM,Kempf1,Wendland}.
The resulting approximation rates are in general not independent 
of the underlying dimension. To remove this dependency, 
we consider in this article now so-called Sobolev spaces 
of hybrid regularity. This class of function spaces and their 
associated norms are designed to simultaneously capture 
two different types of regularity -- isotropic smoothness 
and mixed smoothness. They play an important role in 
quantum chemistry as they describe the regularity of 
the Coulombic wavefunction in the electronic Schr\"odinger 
equation, see \cite{Ayers,Meng,Harry1} for example. As 
shown in \cite{GK1,GK2}, one can derive optimized 
sparse grids for such spaces and indeed obtain 
dimension robust approximation rates under certain 
circumstances. Further applications of Sobolev spaces 
of hybrid regularity can be found in \cite{GOV} for 
parabolic initial boundary value problems and in
\cite{HS} for homogenization problems.

The analysis in \cite{GK1,GK2} is based on wavelet 
bases for the definition of the sparse grid spaces in 
combination with norm equivalences that hold between 
a whole scale of Sobolev spaces. The same technique 
applies also to the Fourier transform, compare \cite{GH1,
GH2}. In contrast, in the present article, we show how 
these results can be transferred to kernel interpolation
where one has \emph{no} norm equivalence. Indeed, to 
verify dimension robust approximation rates, it suffices to 
have Jackson and Bernstein inequalities, which are available 
for kernel interpolation. Moreover, in contrast to the approach
in \cite{GK1,GK2}, our new technique allows to substantially 
extend the range of the scales of Sobolev spaces where 
the estimates are valid.

For sake of simplicity and clearness of representation, we 
restrict ourselves to the simple setting of tensor products 
of Sobolev spaces of periodic functions on the interval 
$\mathbb{T} = [0,1]$ and to equidistant point distributions. 
Hence, we consider the $d$-dimensional torus $\mathbb{T}^d$ 
in higher dimensions. The extension to general product domains 
and quasi-uniform point distributions can be derived along the 
lines of \cite{GHM}. Note that we first consider the case of a 
bivariate sparse grid in order to make the ideas of the proofs 
clear to the reader. Afterwards, we extend the results to the 
multivariate setting.

The outline of this article is as follows: In Section~\ref{sec:prelim},
we introduce the univariate Sobolev spaces under consideration
and review kernel interpolation in reproducing kernel Hilbert 
spaces. In Section~\ref{sct:bivariate}, we focus on the bivariate 
situation. We introduce Sobolev spaces of hybrid regularity and 
derive related Jackson and Bernstein inequalities. Then we 
introduce optimized sparse grid spaces and provide error 
estimates for the kernel interpolation in these spaces. We extend 
the bivariate results to arbitrary dimension in Section~\ref{sct:arbitrary}. 
Concluding remarks are finally stated in Section~\ref{sec:conclusio}.

Throughout this article, in order to avoid the repeated use of 
generic but unspecified constants, we denote by $C \lesssim D$ 
that $C$ is bounded by a multiple of $D$ independently of parameters 
on which $C$ and $D$ may depend. Especially, $C \gtrsim D$ is defined 
as $D \lesssim C$, and $C \sim D$ as $C \lesssim D$ and $C \gtrsim D$.

\section{Preliminaries}\label{sec:prelim}
\subsection{The Sobolev space $\bs{H^s(\mathbb{T})}$ of periodic functions}
Let $\mathbb{T} := [0,1]$ be the unit interval and
$H^s(\mathbb{T})$ denote the periodic Sobolev space
of fractional smoothness $s\ge 0$. For $s\in\mathbb{N}$,
this space can be characterized by
\begin{align*}
  H^s(\mathbb{T}) := \big\{f\in & L^2(\mathbb{T}): 
      \|f^{(k)}\|_{L^2(\mathbb{T})}<\infty\\ 
      &\text{and}\ f^{(k-1)}(0)=f^{(k-1)}(1)
	\ \text{for all}\ k=1,\dots,s\big\},
\end{align*}
equipped with the norm
\[
  \|f\|_{H^s(\mathbb{T})}^2 = \bigg(\int_{\mathbb{T}} f(x)\d x\bigg)^2
  	+ \int_{\mathbb{T}} \big(f^{(s)}(x)\big)^2\d x <\infty.
\]
Indeed, the above expression defines a norm 
for $H^s(\mathbb{T})$ because the integrals 
of the derivatives $f^{(k)}$ vanishes for all $k=1,\dots,s-1$ 
due to the periodic boundary conditions.

We need the following definition:

\begin{definition}\label{def:RKHS}
A \emph{reproducing kernel} for a Hilbert space 
$\mathcal{H}$ of functions \(u\colon\Omega\to\mathbb{R}\)
with inner product $(\cdot,\cdot)_{\mathcal{H}}$ is a function 
$\kernel:\Omega\times\Omega\to\mathbb{R}$ such that
\begin{enumerate}
  \item $\kernel(\cdot,y)\in\mathcal{H}$ for all $y\in\Omega$,
  \item $u(y) = \big(u,\kernel(\cdot,y)\big)_\mathcal{H}$ 
  for all $u\in\mathcal{H}$ and all $y\in\Omega$.
\end{enumerate}
A Hilbert space $\mathcal{H}$ with reproducing kernel 
$\kernel\colon\Omega\times\Omega\to\mathbb{R}$ is called
\emph{reproducing kernel Hilbert space} (RKHS). 
\end{definition}

Reproducing kernels are known to be \emph{symmetric} and 
\emph{positive semidefinite}. Thereby, a continuous kernel 
$\kernel:\Omega\times\Omega\to\mathbb{R}$ is called 
positive semidefinite if 
\begin{equation}\label{eq:spd}
\sum_{i,j=1}^N \alpha_i\alpha_j
\kernel (x_i,x_j) \geq 0
\end{equation}
holds for all all mutually distinct points $x_1,\ldots,x_N\in\Omega$ and 
all $\alpha_1,\dots,\alpha_N\in\mathbb{R}$, for any $N\in\mathbb{N}$. 
The kernel is even \emph{positive definite} if the inequality in \eqref{eq:spd} 
is strict whenever at least one \(\alpha_i\) is different from $0$.

For general real $s> \frac{1}{2}$, the reproducing kernel in 
$H^s(\mathbb{T})$ is given by
\begin{equation}
\label{eq:kernel(0,1)}
  \kernel(x,y) = 1 + \sum_{k\in\mathbb{Z}\setminus\{0\}} 
  \frac{1}{(2\pi |k|)^{2s}} \exp\big(2\pi i k(x-y)\big).
\end{equation}
It is symmetric and positive definite. For 
$s\in\mathbb{N}$ being a natural number, this
kernel simplifies to
\[
 \kernel(x,y) = 1 + \frac{(-1)^{s+1}}{(2s)!} B_{2s}(|x-y|),
\]
where $B_{2s}: [0,1]\to\mathbb{R}$ denotes the Bernoulli
polynomial of degree $2s$, compare \cite{BTA,Wahba1}.

\subsection{Kernel interpolation}
We fix the $H^p(\mathbb{T})$ with $p > \frac{1}{2}$ 
as reproducing kernel Hilbert space with kernel $k(\cdot,\cdot)$
given by \eqref{eq:kernel(0,1)} for $s = p$. For $j\in\mathbb{N}_0$, we 
define the index set 
\[
\Delta_j := \{0,\ldots,2^j-1\}
\] 
and the associated 
equidistant grid 
\[
X_j := \{x_{j,k} := 2^{-j}k: k\in\Delta_j\}. 
\]
Then, the \emph{kernel interpolant}
\[
  u_j(x) = \sum_{k\in\Delta_j} u_{j,k} \kernel(x,x_{j,k})\in H^p(\mathbb{T})
\]
with respect to the grid $X_j$ is given by 
solving the linear system of equations
\begin{equation}\label{eq:system1D}
  {\bs K}_j{\bs u}_j = {\bs f}_j,
\end{equation}
where 
\begin{equation}\label{eq:1Ddiscrete}
{\bs K}_j = [\kernel(x_{j,k},x_{j,k'})]_{k,k'\in\Delta_j},\quad
{\bs u}_j = [u_{j,k}]_{k\in\Delta_j},\quad
{\bs f}_j = [u(x_{j,k})]_{k\in\Delta_j}.
\end{equation}
The matrix ${\bs K}_j$ is called \emph{kernel matrix}. It is a
periodic Toeplitz matrix, so that the linear system \eqref{eq:1Ddiscrete}
of equations can efficiently be solved by the fast Fourier transform.

The kernel interpolant is known to be the \emph{best approximation} 
of a given function $u\in H^p(\mathbb{T})$ in
\[
  V_j = \spn\{\kernel(\cdot,x):x\in X_j\}\subset H^p(\mathbb{T})
\]
with respect to the $H^p(\mathbb{T})$-norm. This
means that it solves the following Galerkin problem: 
\begin{equation}\label{eq:projectionP}
\text{Seek $P_j u\in V_j$:}\quad 
  (P_j u,v)_{H^p(\mathbb{T})} = (u,v)_{H^p(\mathbb{T})} 
  \quad\forall\ v\in V_j.
\end{equation}
In other words, the (Galerkin) projection $P_j: H^p(\mathbb{T})
\to V_j$ is $H^p(\mathbb{T})$-orthogonal. Recall that $P_j u$ is 
obtained by simply solving the associated kernel system 
\eqref{eq:system1D} with \eqref{eq:1Ddiscrete}.

\subsection{Jackson and Bernstein estimates}
The approximation $P_j u$ satisfies the error estimate
\begin{equation}\label{eq:L^2-norm}
  \|u-P_j u\|_{L^2(\mathbb{T})} \lesssim 2^{-pj}\|u\|_{H^p(\mathbb{T})}
\end{equation}
uniformly in $j\in\mathbb{N}_0$, see \cite[Proposition 11.30]{Wendland}
for example. Since there holds
\[
  (u,v)_{H^p(\mathbb{T})}
  	\lesssim \|u\|_{L^2(\mathbb{T})} \|v\|_{H^{2p}(\mathbb{T})},
\]
this rate of convergence can be doubled by using
\cite[Theorem 1]{Sloan} provided that the data satisfy
even $u\in H^{2p}(\mathbb{T})$, i.e., we then have
\begin{equation}\label{eq:doubling}
  \|u-P_j u\|_{L^2(\mathbb{T})} \lesssim 2^{-2jp}\|u\|_{H^{2p}(\mathbb{T})}.
\end{equation}
In view of this result and employing Galerkin orthogonality, we find
\begin{align*}
  \|u-P_j u\|_{H^p(\mathbb{T})}^2 &=
  	(u-P_j u,u)_{H^p(\mathbb{T})}\\
  	&\lesssim \|u-P_j u\|_{L^2(\mathbb{T})}
  	\|u\|_{H^{2p}(\mathbb{T})}\\
	&\lesssim 2^{-2jp}\|u\|_{H^{2p}(\mathbb{T})}^2,
\end{align*}
which implies
\begin{equation}\label{eq:energy-norm}
  \|u-P_j u\|_{H^p(\mathbb{T})} \lesssim 2^{-jp}\|u\|_{H^{2p}(\mathbb{T})}.
\end{equation}

Next we consider the projection $Q_j: H^p(\mathbb{T})
\to V_j$ given by 
\begin{equation}\label{eq:projectionQ}
Q_j u := P_j u - P_{j-1} u, \quad\text{where}\ P_{-1} u := 0. 
\end{equation}
We find from \eqref{eq:doubling} and \eqref{eq:energy-norm} 
the estimates
\[
  \|Q_j u\|_{L^2(\mathbb{T})}\lesssim 2^{-jp}\|u\|_{H^p(\mathbb{T})},\quad
  \|Q_j u\|_{H^p(\mathbb{T})}\lesssim 2^{-jp}\|u\|_{H^{2p}(\mathbb{T})}.
\]
By interpolation, we thus arrive at the approximation property,
also known as \emph{Jackson's inequality},
\begin{equation}\label{eq:approx}
 \|Q_j u\|_{H^{t_1}(\mathbb{T})}\lesssim 
 2^{-j(t_2-t_1)}\|u\|_{H^{t_2}(\mathbb{T})}
 \quad\text{for all $0\le t_1\le p\le t_2\le 2p$}.
\end{equation}
Finally, from \cite{NWH,SL}, we obtain for $u\in H^s(\mathbb{T})$ 
the inverse inequality, also known as \emph{Bernstein's inequality},
\[
  \|P_j u\|_{H^{t_2}(\mathbb{T})}\lesssim 2^{j(t_2-t_1)}\|P_j u\|_{H^{t_1}(\mathbb{T})}
 	\quad\text{for all $0\le t_1\le t_2 \le p$},
\]
which we will need in the following, immediately 
resulting from
\begin{equation}\label{eq:inverse}
  \|Q_j u\|_{H^{t_2}(\mathbb{T})}\lesssim 2^{j(t_2-t_1)}\|Q_j u\|_{H^{t_1}(\mathbb{T})}
 	\quad\text{for all $0\le t_1\le t_2 \le p$}.
\end{equation}

Note the different ranges where the Jackson and Bernstein 
inequalities hold. Estimate \eqref{eq:approx} is valid whenever the function 
$u$ to be approximated provides extra regularity relative to the underlying 
RKHS $H^p(\mathbb{T})$ and the error is measured in a weaker 
norm than the $H^p(\mathbb{T})$-norm. For functions $u\in H^p
(\mathbb{T})$, the inverse estimate \eqref{eq:inverse} however
bounds a stronger norm of the expression $Q_ju\in V_j$ by a 
weaker norm of this expression, but both norms are now
weaker than the $H^p(\mathbb{T})$-norm.

\section{Bivariate approximation}
\label{sct:bivariate}
\subsection{Sobolev spaces of hybrid regularity}
Sobolev spaces of hybrid regularity have been introduced
firstly in \cite{GK1}. They are given by
\begin{equation}\label{eq:GK-spaces}
  H_{\isomix}^{s,t}(\mathbb{T}^2) :=
  H^{t}(\mathbb{T})\otimes H^{s+t}(\mathbb{T})
  \cap H^{s+t}(\mathbb{T})\otimes H^{t}(\mathbb{T})
\end{equation}
This class of spaces $H_{\isomix}^{s,t}(\mathbb{T}^2)$ can be 
characterized for $t\in\mathbb{N}_0$ by
\[
H_{\isomix}^{s,t}(\mathbb{T}^2) := \big\{f\in L^2(\mathbb{T}^2):
	\|\partial^{\bs\alpha} f\|_{H_{\mix}^{t}(\mathbb{T}^2)}<\infty
	\ \text{for all}\ \|\bs\alpha\|_1\le s\big\}.
\]
Thus, the functions that are contained in $H_{\isomix}^{s,t}(\mathbb{T}^2)$ 
are all functions from the classical, isotropic Sobolev space 
$H_{\iso}^{s}(\mathbb{T}^2)$, where the $s$-th order derivatives 
provide in addition mixed Sobolev smoothness of order $t$. Note 
that the classical, isotropic Sobolev space $H_{\iso}^s(\mathbb{T}^2)$ 
satisfies
\[
 H_{\iso}^s(\mathbb{T}^2) = H_{\isomix}^{s,0}(\mathbb{T}^2)
 = H^{s}(\mathbb{T})\otimes L^2(\mathbb{T})
 \cap L^2(\mathbb{T})\otimes H^{s}(\mathbb{T})
\]
while the classical Sobolev space $H_{\mix}^t(\mathbb{T}^2)$ 
of dominating mixed regularity, also called \emph{mixed} 
Sobolev space, satisfies
\[
 H_{\mix}^t(\mathbb{T}^2) = H_{\isomix}^{0,t}(\mathbb{T}^2).
\]
We refer to Figure~\ref{fig:spaces} for an illustration of the
different Sobolev spaces under consideration. It especially
shows the obvious embeddings
\[
H_{\mix}^{s+t}(\mathbb{T}^2) \subset H_{\isomix}^{s,t}(\mathbb{T}^2)
\subset H_{\iso}^s(\mathbb{T}^2).
\]

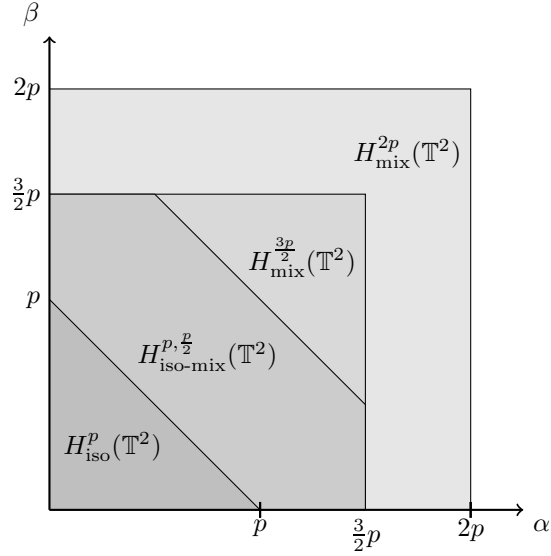
\begin{figure}[htb]
\begin{center}
\begin{tikzpicture}[scale=0.7]

\fill[gray!20] (0,0) -- (8,0) -- (8,8) -- (0,8);
\draw[-] (0,8) -- (8,8) -- (8,0); 
\node at (6.8,6.8) {$H_{\mix}^{2p}(\mathbb{T}^2)$};    

\fill[gray!30] (0,0) -- (6,0) -- (6,6) -- (0,6);
\draw[-] (0,0) -- (6,0) -- (6,6) -- (0,6); 
\node at (4.8,4.8) {$H_{\mix}^{\frac{3p}{2}}(\mathbb{T}^2)$};

\fill[gray!40] (0,0) -- (6,0) -- (6,2) -- (2,6) -- (0,6);
\draw[-] (6,0) -- (6,2) -- (2,6) -- (0,6); 
\node at (3,3) {$H_{\isomix}^{p,\frac{p}{2}}(\mathbb{T}^2)$};

\fill[gray!50] (0,0) -- (4,0) -- (0,4);
\draw[-] (0,4) -- (4,0); 
\node at (1.2,1.2) {$H_{\iso}^p(\mathbb{T}^2)$};    
      
\draw[thick,->] (0,0) -- (9,0) node[below right] {$\alpha$};
\draw[thick,->] (0,0) -- (0,9) node[above left] {$\beta$};
\node[left] at (0,4) {$p$};                  
\node[left] at (0,6) {$\frac{3}{2}p$}; 
\node[left] at (0,8) {$2p$}; 
\node[below] at (4,0) {$p$};               
\node[below] at (6,0) {$\frac{3}{2}p$}; 
\node[below] at (8,0) {$2p$}; 
\draw[thick] (8,-0.15) -- (8,0.15);          
\draw[thick] (4,-0.15) -- (4,0.15);          
\draw[thick] (8,-0.15) -- (8,0.15);          
\draw[thick] (4,-0.15) -- (4,0.15);          
\end{tikzpicture}
\end{center}
\caption{\label{fig:spaces}%
Visualization of the derivatives $\partial_x^{\alpha}\partial_y^{\beta} 
f(x,y)$ being bounded in the Sobolev spaces $H_{\isomix}^{s,t}(\mathbb{T}^2)$ 
of hybrid regularity in comparison to the isotropic Sobolev spaces 
$H_{\iso}^s(\mathbb{T}^2)$ and the Sobolev spaces of dominating 
mixed derivatives $H_{\mix}^t(\mathbb{T}^2)$ for the specific choices
of $s$ and $t$.}
\end{figure}

\subsection{Bernstein and Jackson inequalities}
If $\kernel(\cdot,\cdot)$ is the reproducing kernel in 
$H^p(\mathbb{T})$, then the product kernel
\begin{equation}\label{eq:kernel}
  \bs\kernel({\bs x},{\bs y}) := 
  	\kernel(x_1,y_1)\otimes \kernel(x_2,y_2).
\end{equation}
is the reproducing kernel in the mixed Sobolev space 
$H_{\mix}^p(\mathbb{T}^2)$. This space will serve as the 
reproducing kernel Hilbert space under consideration, 
where we carry out all our estimates in the following. 

\begin{remark}
Let $\kernel_s(x,y)$ be the kernel for the Sobolev space 
$H^{s}(\mathbb{T})$ and $\kernel_t(x,y)$ be the kernel 
for the Sobolev space $H^{t}(\mathbb{T})$, where
we assume that $d/2< t\le s$. Then, the kernel
\begin{equation}\label{eq:kernel_hybrid}
  \bs\kernel({\bs x},{\bs y}) := 
  	\kernel_s(x_1,y_1)\otimes \kernel_t(x_2,y_2)
     + \kernel_t(x_1,y_1)\otimes \kernel_s(x_2,y_2) 
\end{equation}
is the kernel of the Sobolev space of hybrid smoothness
$H_{\isomix}^{s-t,t}(\mathbb{T}^2)$. Note, however, that 
for this kernel no estimates on the approximation errors 
are known so far.
\end{remark}

Given a function $u\in H_{\mix}^p(\mathbb{T}^2)$, 
the computation of the respective kernel interpolant 
$u\mapsto {\bs P}_{\bs j}u\in {\bs V}_{\bs j} := V_{j_1}
\otimes V_{j_2}$, where 
\[
{\bs P}_{\bs j} := P_{j_1}\otimes P_{j_2}: H_{\mix}^p(\mathbb{T}^2)\to {\bs V}_{\bs j}
\]
with $P_j$ given by \eqref{eq:projectionP}, amounts 
to solving the linear system of equations
\begin{equation}\label{eq:system2D}
  ({\bs K}_{j_1}\otimes {\bs K}_{j_2}) {\bs u}_{\bs j} = {\bs f}_{\bs j}.
\end{equation}
Here ${\bs K}_{j_1}$ and ${\bs K}_{j_2}$ are the 
univariate kernel matrices defined in \eqref{eq:1Ddiscrete},
while 
\[
{\bs u}_{\bs j} = [u_{{\bs j},{\bs k}}]_{{\bs k}\in{\bs \Delta}_{\bs j}},\quad
{\bs f}_{\bs j} = [u({\bs x}_{{\bs j},{\bs k}})]_{{\bs k}\in{\bs \Delta}_{\bs j}},
\]
where
\[
{\bs \Delta}_{\bs j} := \Delta_{j_1}\times \Delta_{j_2},
\quad {\bs x}_{{\bs j},{\bs k}} := (x_{j_1,k_1},x_{j_2,k_2}).
\]
Since the system matrix is a Kronecker product of periodic 
Toeplitz matrices, the linear system \eqref{eq:system2D} 
of equations can efficiently be solved by means of the fast 
Fourier transform using well-known tensor and matrizication 
techniques found in e.g.~\cite{Hackbusch}.

Setting 
\[
  {\bs Q}_{\bs j} := Q_{j_1}\otimes Q_{j_2} 
  = (P_{j_1}-P_{j_1-1})\otimes(P_{j_2}-P_{j_2-1}):
  H_{\mix}^p(\mathbb{T}^2)\to V_{j_1}\otimes V_{j_2}
\]   
with $Q_j$ given by \eqref{eq:projectionQ}, 
and using standard tensor product arguments, we obtain 
in view of \eqref{eq:approx} the approximation property
\begin{equation}\label{eq:approx2}
 \|{\bs Q}_{\bs j} u\|_{H_{\mix}^{t_1}(\mathbb{T}^2)}\lesssim 
 2^{-(t_2-t_1)\|{\bs j}\|_1}\|u\|_{H_{\mix}^{t_2}(\mathbb{T}^2)}
 \quad\text{for all $0\le t_1\le p\le t_2\le 2p$}
\end{equation}
and in view of \eqref{eq:inverse} the inverse inequality
\begin{equation}\label{eq:inverse2}
   \|{\bs Q}_{\bs j} u\|_{H_{\mix}^{t_2}(\mathbb{T}^2)}
   \lesssim  2^{(t_2-t_1)\|{\bs j}\|_1}\|{\bs Q}_{\bs j} u\|_{H_{\mix}^{t_1}(\mathbb{T}^2)}
 \quad\text{for all $0\le t_1\le t_2 \le p$}.
\end{equation}

\begin{lemma}[Isotropic inverse estimate]
\label{lem:inverse_iso}
For $0\le s_1\le s_2\le p$ and $u\in H_{\mix}^p(\mathbb{T}^2)$, we find
\[
  \|{\bs Q}_{\bs j} u\|_{H_{\iso}^{s_2}(\mathbb{T}^2)}
  \lesssim 2^{(s_2-s_1)\|{\bf j}\|_\infty}\|{\bs Q}_{\bs j} u\|_{H_{\iso}^{s_1}(\mathbb{T}^2)}
\]
for all ${\bs j}\in\mathbb{N}_0^2$.
\end{lemma}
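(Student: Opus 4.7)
The plan is to decompose the isotropic norm on $\mathbb{T}^2$ into its two natural anisotropic constituents and then apply the univariate Bernstein inequality \eqref{eq:inverse} separately in each coordinate direction. Since the isotropic Sobolev space admits the intersection characterization
\[
  H_{\iso}^{s}(\mathbb{T}^2) = H^{s}(\mathbb{T})\otimes L^2(\mathbb{T}) \,\cap\, L^2(\mathbb{T})\otimes H^{s}(\mathbb{T}),
\]
with equivalence of norms
\[
  \|v\|_{H_{\iso}^{s}(\mathbb{T}^2)}^2 \sim \|v\|_{H^{s}(\mathbb{T})\otimes L^2(\mathbb{T})}^2 + \|v\|_{L^2(\mathbb{T})\otimes H^{s}(\mathbb{T})}^2,
\]
it suffices to bound each of the two anisotropic tensor product norms on the left-hand side for $s=s_2$ by the corresponding norm for $s=s_1$.

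First I would treat the term $\|\bs Q_{\bs j} u\|_{H^{s_2}(\mathbb{T})\otimes L^2(\mathbb{T})}$. Writing $\bs Q_{\bs j} = Q_{j_1}\otimes Q_{j_2}$ and using standard tensor product arguments (Fubini plus the fact that $Q_{j_2}$ acts only on the second variable), the univariate Bernstein inequality \eqref{eq:inverse} applied in the first variable -- which is admissible precisely because $0\le s_1\le s_2\le p$ -- yields
\[
  \|\bs Q_{\bs j} u\|_{H^{s_2}(\mathbb{T})\otimes L^2(\mathbb{T})} \lesssim 2^{(s_2-s_1)j_1}\|\bs Q_{\bs j} u\|_{H^{s_1}(\mathbb{T})\otimes L^2(\mathbb{T})}.
\]
An entirely analogous argument in the second variable gives
\[
  \|\bs Q_{\bs j} u\|_{L^2(\mathbb{T})\otimes H^{s_2}(\mathbb{T})} \lesssim 2^{(s_2-s_1)j_2}\|\bs Q_{\bs j} u\|_{L^2(\mathbb{T})\otimes H^{s_1}(\mathbb{T})}.
\]

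Now I would simply estimate the two exponential prefactors crudely by $2^{(s_2-s_1)j_i}\le 2^{(s_2-s_1)\|\bs j\|_\infty}$ for $i=1,2$. Summing the squares of both bounds and invoking the norm equivalence above (first to bound the sum from above by $\|\bs Q_{\bs j} u\|_{H_{\iso}^{s_1}(\mathbb{T}^2)}^2$, and at the start to bound $\|\bs Q_{\bs j} u\|_{H_{\iso}^{s_2}(\mathbb{T}^2)}^2$ from below by the left-hand sides) produces the claimed estimate. The only even mildly delicate step is the tensorization of the univariate Bernstein inequality to the anisotropic spaces $H^{s}(\mathbb{T})\otimes L^2(\mathbb{T})$ and $L^2(\mathbb{T})\otimes H^{s}(\mathbb{T})$; everything else is bookkeeping. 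Replacing $j_1$ and $j_2$ by the common upper bound $\|\bs j\|_\infty$ is precisely what distinguishes this isotropic inverse estimate from its mixed counterpart \eqref{eq:inverse2}, where the exponent is $\|\bs j\|_1$.
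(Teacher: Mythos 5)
Your proposal is correct and follows essentially the same route as the paper's own proof: decompose the isotropic norm via the intersection characterization $H_{\iso}^{s}(\mathbb{T}^2) = H^{s}(\mathbb{T})\otimes L^2(\mathbb{T}) \cap L^2(\mathbb{T})\otimes H^{s}(\mathbb{T})$, apply the univariate Bernstein inequality \eqref{eq:inverse} in each coordinate separately, and bound $2^{(s_2-s_1)j_i}$ by $2^{(s_2-s_1)\|{\bs j}\|_\infty}$. The only cosmetic difference is that you sum squares of the two anisotropic contributions while the paper sums the norms themselves, which is equivalent.
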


\begin{proof}
According to the univariate inverse estimate \eqref{eq:inverse},
we find
\begin{align*}
\|{\bs Q}_{\bs j} u\|_{H_{\iso}^{s_2}(\mathbb{T}^2)}
&\lesssim \|{\bs Q}_{\bs j} u\|_{H^{s_2}(\mathbb{T})\otimes L^2(\mathbb{T})} 
+ \|{\bs Q}_{\bs j} u\|_{L^2(\mathbb{T})\otimes H^{s_2}(\mathbb{T})}\\
&\lesssim 2^{(s_2-s_1) j_1} \|{\bs Q}_{\bs j} u\|_{H^{s_1}(\mathbb{T})\otimes L^2(\mathbb{T})} 
+ 2^{(s_2-s_1) j_2} \|{\bs Q}_{\bs j} u\|_{L^2(\mathbb{T})\otimes H^{s_1}(\mathbb{T})}\\
&\lesssim 2^{(s_2-s_1)\|{\bs j}\|_{\infty}} \|{\bs Q}_{\bs j} u\|_{H_{\iso}^{s_1}(\mathbb{T}^2)}.
\end{align*}
\end{proof}

We note that the proof applies also if we add some mixed
Sobolev regularity $t\ge 0$, i.e., we have
\begin{equation}\label{eq:inverse_shifted}
  \|{\bs Q}_{\bs j} u\|_{H_{\isomix}^{s_2,t}(\mathbb{T}^2)}
  \lesssim 2^{(s_2-s_1)\|{\bf j}\|_\infty}\|{\bs Q}_{\bs j} u\|_{H_{\isomix}^{s_1,t}(\mathbb{T}^2)}
\end{equation}
for all ${\bs j}\in\mathbb{N}_0^2$ provided that $0\le s_1\le s_2\le p-t$.
Indeed, even more general, we will proof the following result.

\begin{lemma}[General inverse estimate]
\label{lem:inverse}
For $0\le s_1\le s_2\le p$ and $0\le t_1\le t_2\le p$ such that 
$s_2+t_2\le p$ and $u\in H_{\mix}^p(\mathbb{T}^2)$, there holds
\[
  \|{\bs Q}_{\bs j} u\|_{H_{\isomix}^{s_2,t_2}(\mathbb{T}^2)}
  \lesssim 2^{(s_2-s_1)\|{\bf j}\|_\infty+(t_2-t_1)\|{\bf j}\|_1}
  \|{\bs Q}_{\bs j} u\|_{H_{\isomix}^{s_1,t_1}(\mathbb{T}^2)}
\]
for all ${\bs j}\in\mathbb{N}_0^2$.
\end{lemma}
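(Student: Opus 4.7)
The plan is to reduce the claim to the univariate Bernstein inequality \eqref{eq:inverse} by exploiting the tensor-intersection structure of the hybrid spaces. Recalling from \eqref{eq:GK-spaces} that
\[
 H_{\isomix}^{s,t}(\mathbb{T}^2) = H^{t}(\mathbb{T})\otimes H^{s+t}(\mathbb{T})
 \cap H^{s+t}(\mathbb{T})\otimes H^{t}(\mathbb{T}),
\]
we obtain the norm equivalence
\[
 \|v\|_{H_{\isomix}^{s,t}(\mathbb{T}^2)}^2 \sim
 \|v\|_{H^{t}(\mathbb{T})\otimes H^{s+t}(\mathbb{T})}^2
 + \|v\|_{H^{s+t}(\mathbb{T})\otimes H^{t}(\mathbb{T})}^2,
\]
so it suffices to bound each of the two tensor components of ${\bs Q}_{\bs j} u$ in $H_{\isomix}^{s_2,t_2}(\mathbb{T}^2)$ separately, with the claimed prefactor, by $\|{\bs Q}_{\bs j} u\|_{H_{\isomix}^{s_1,t_1}(\mathbb{T}^2)}$.

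For the first tensor term, I would apply the univariate inverse inequality \eqref{eq:inverse} in each variable independently. In the first variable, lowering smoothness from $t_2$ to $t_1$ is admissible since $t_1 \le t_2 \le p$, giving a factor $2^{(t_2-t_1)j_1}$; in the second variable, lowering smoothness from $s_2+t_2$ to $s_1+t_1$ is admissible since $s_1+t_1 \le s_2+t_2 \le p$ by hypothesis, giving a factor $2^{(s_2+t_2-s_1-t_1)j_2}$. Hence
\[
 \|{\bs Q}_{\bs j} u\|_{H^{t_2}(\mathbb{T})\otimes H^{s_2+t_2}(\mathbb{T})}
 \lesssim 2^{(t_2-t_1)j_1 + (s_2+t_2-s_1-t_1)j_2}
 \|{\bs Q}_{\bs j} u\|_{H^{t_1}(\mathbb{T})\otimes H^{s_1+t_1}(\mathbb{T})}.
\]

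The key combinatorial step is to recognize that the exponent can be rewritten and estimated as
\[
 (t_2-t_1)j_1 + (s_2-s_1)j_2 + (t_2-t_1)j_2
 = (t_2-t_1)\|{\bs j}\|_1 + (s_2-s_1)j_2
 \le (t_2-t_1)\|{\bs j}\|_1 + (s_2-s_1)\|{\bs j}\|_\infty,
\]
which is exactly the prefactor appearing in the statement. The second tensor component is handled by the same argument with the roles of the two variables swapped, so that $(s_2-s_1)j_2$ is replaced by $(s_2-s_1)j_1$, which is again bounded by $(s_2-s_1)\|{\bs j}\|_\infty$. The two resulting tensor norms are then each dominated by $\|{\bs Q}_{\bs j} u\|_{H_{\isomix}^{s_1,t_1}(\mathbb{T}^2)}$ via the norm equivalence above.

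The argument is essentially bookkeeping; I do not expect any real obstacle beyond carefully verifying that the smoothness indices remain in the admissible range $[0,p]$ for the univariate Bernstein estimate, which is precisely what the hypothesis $s_2+t_2 \le p$ enforces. This constraint marks the maximal hybrid smoothness accommodated by the underlying RKHS $H_{\mix}^p(\mathbb{T}^2)$, and it is also the reason why the isotropic contribution in the exponent scales with $\|{\bs j}\|_\infty$ (as in Lemma~\ref{lem:inverse_iso}) while the mixed contribution scales with $\|{\bs j}\|_1$ (as in \eqref{eq:inverse2}), the two behaviors combining additively in the hybrid case.
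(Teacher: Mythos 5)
Your proposal is correct: the exponent bookkeeping checks out, and the hypothesis $s_2+t_2\le p$ is exactly what keeps the univariate smoothness index $s_2+t_2$ inside the admissible range $[0,p]$ of the Bernstein inequality \eqref{eq:inverse}, as you observe. The route is organized differently from the paper's, though. The paper's proof is a two-step composition of previously established bivariate inverse estimates: it first raises only the isotropic index via \eqref{eq:inverse_shifted}, picking up the factor $2^{(s_2-s_1)\|{\bs j}\|_\infty}$, and then raises the mixed index via (the hybrid-shifted version of) \eqref{eq:inverse2}, picking up $2^{(t_2-t_1)\|{\bs j}\|_1}$; the intermediate space is $H_{\isomix}^{s_1,t_2}(\mathbb{T}^2)$. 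You instead unfold everything back to the univariate level in a single pass: split the intersection norm of \eqref{eq:GK-spaces} into its two tensor components, apply \eqref{eq:inverse} in each coordinate with both indices raised simultaneously, and only at the end distribute the exponent $(t_2-t_1)\|{\bs j}\|_1+(s_2-s_1)j_i$ between the $\ell^1$- and $\ell^\infty$-contributions. Both arguments rest on the same two ingredients (the univariate Bernstein inequality and the standard equivalence of the intersection norm with the sum of the two tensor-product norms, which the paper also uses implicitly in Lemma~\ref{lem:inverse_iso}); yours is more self-contained and makes the origin of the constraint $s_2+t_2\le p$ transparent, while the paper's is shorter given that \eqref{eq:inverse_shifted} and \eqref{eq:inverse2} are already in place. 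One minor point worth stating explicitly in your write-up is that applying \eqref{eq:inverse} ``in each variable independently'' to ${\bs Q}_{\bs j}u\in V_{j_1}\otimes V_{j_2}$ is the same standard tensorization step the paper invokes when deriving \eqref{eq:inverse2}; it is not an issue, but it is the one place where something beyond pure bookkeeping is used.
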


\begin{proof}
Applying \eqref{eq:inverse_shifted} yields
\[
\|{\bs Q}_{\bs j} u\|_{H_{\isomix}^{s_2,t_2}(\mathbb{T}^2)}
\lesssim 2^{(s_2-s_1)\|{\bs j}\|_{\infty}} \|{\bs Q}_{\bs j} u\|_{H_{\isomix}^{s_1,t_2}(\mathbb{T}^2)}.
\]
In view of the inverse estimate \eqref{eq:inverse2}, we further have
\[
 \|{\bs Q}_{\bs j} u\|_{H_{\isomix}^{s_1,t_2}(\mathbb{T}^2)}
 \lesssim 2^{(t_2-t_1)\|{\bs j}\|_1} \|{\bs Q}_{\bs j} u\|_{H_{\isomix}^{s_1,t_1}(\mathbb{T}^2)}.
\]
Putting both estimates together, we obtain the desired
result.
\end{proof}

\subsection{Optimized sparse grid spaces}
We now discuss the construction of optimal sparse grid 
spaces for the approximation in the Sobolev space 
$H_{\isomix}^{s,t}(\mathbb{T}^2)$ of hybrid smoothness 
as given in \eqref{eq:GK-spaces}. To this end, define 
the \emph{optimized sparse grid space}
\begin{equation}\label{eq:hat_V}
  \widehat{\bs V}_J^\lambda := \sum_{{\bs\lambda}\in\mathcal{I}_J^{\lambda}}
  V_{j_1}\otimes V_{j_2},
\end{equation}
where the underlying index set for the levels is given by
\begin{equation}\label{eq:hat_I}
  \mathcal{I}_J^{\lambda} = \{{\bs j}\in\mathbb{N}_0^2:
  	\|{\bs j}\|_1-\lambda\|{\bs j}\|_\infty\le J(1-\lambda)\}
\end{equation}
with $\lambda\in (-\infty,1)$. Note that the choice $\lambda 
\to -\infty$ yields the classical full tensor product space while 
the choice $\lambda = 0$ results in the classical sparse grid 
space, compare Figure~\ref{fig:indexset} for an illustration.

\begin{figure}[hbt]
\begin{center}
\includegraphics[width=0.35\textwidth,height=0.32\textwidth,trim= 50 20 40 20, clip]{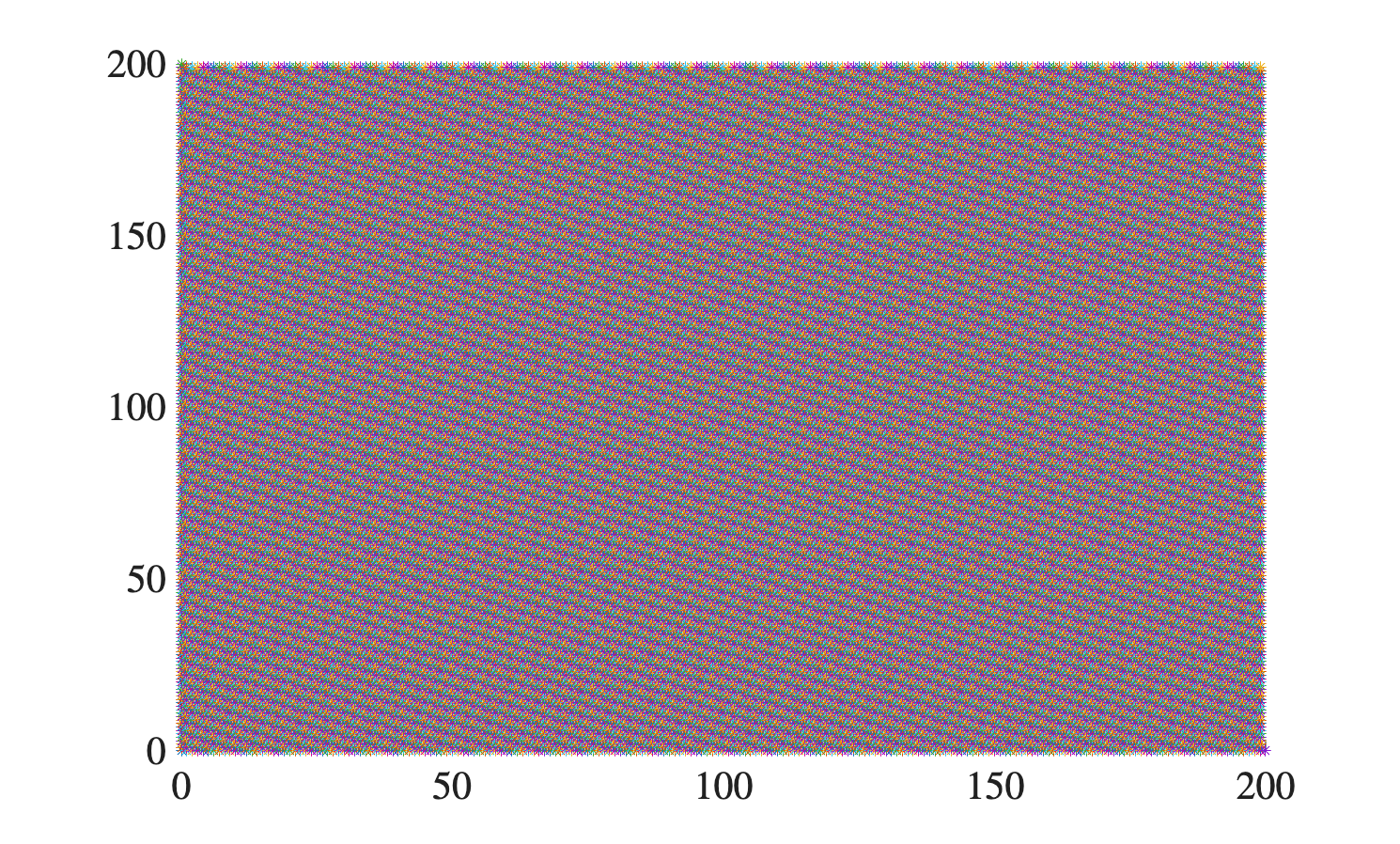}\quad
\includegraphics[width=0.35\textwidth,height=0.32\textwidth,trim= 50 20 40 20, clip]{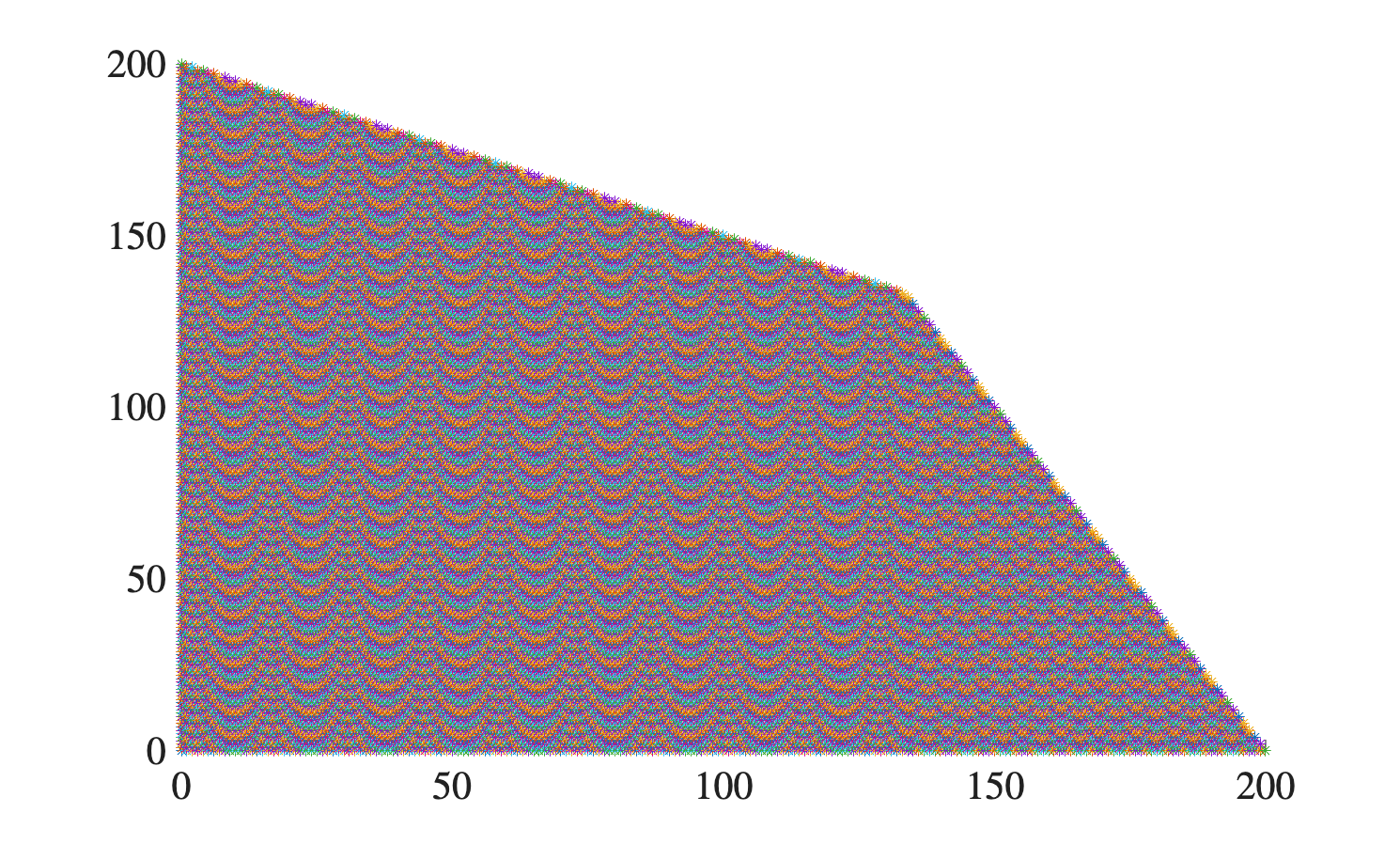}\\[2ex]
\includegraphics[width=0.35\textwidth,height=0.32\textwidth,trim= 50 20 40 20, clip]{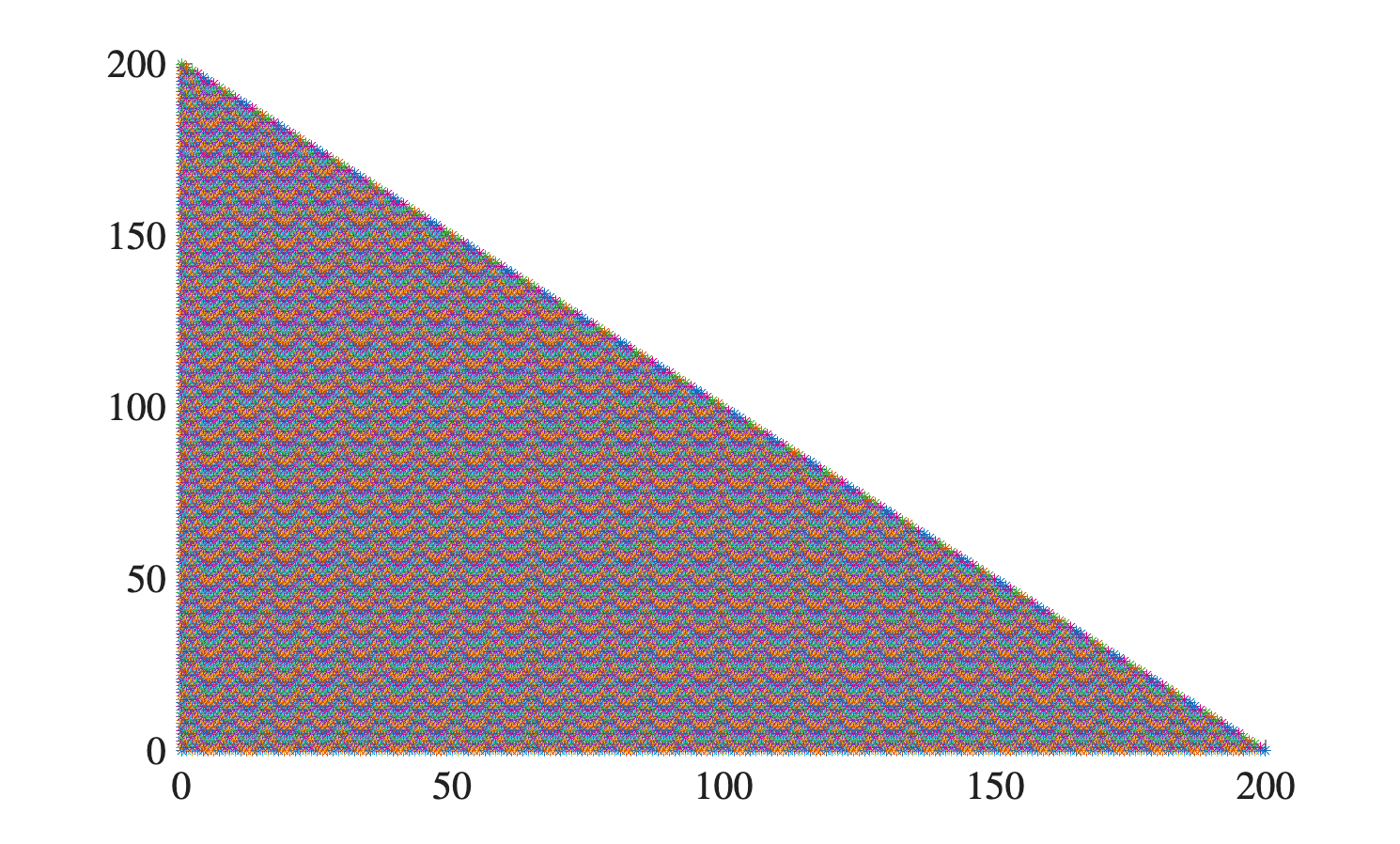}\quad
\includegraphics[width=0.35\textwidth,height=0.32\textwidth,trim= 50 20 40 20, clip]{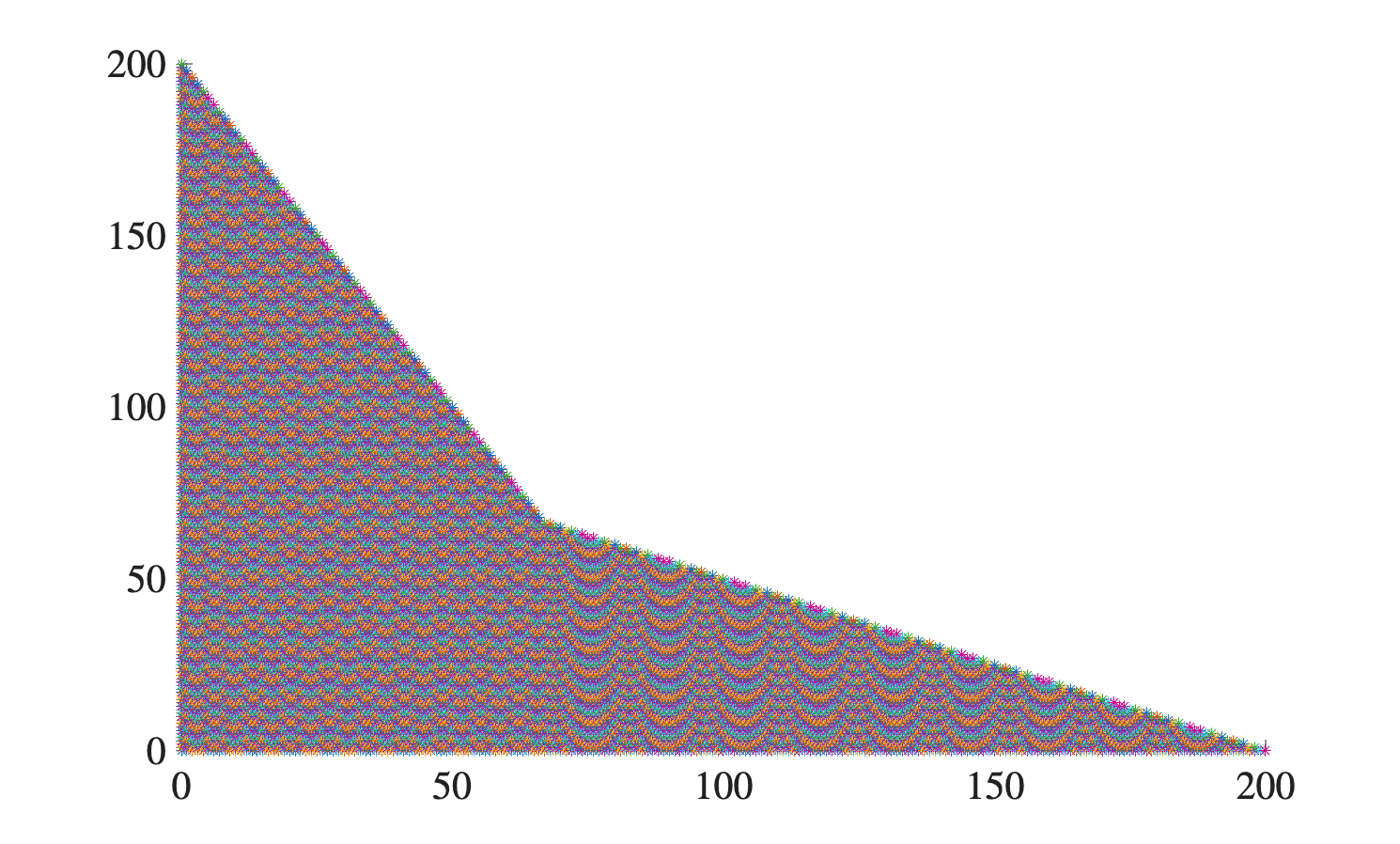}
\caption{\label{fig:indexset}
The index sets $\mathcal{I}_J^{\lambda}$ for $J=200$ and
$\lambda\to -\infty$ (top left),  $\lambda = -1$ (top right), $\lambda = 0$ 
(bottom left), and  $\lambda = 0.5$ (bottom right).}
\end{center}
\end{figure}

\begin{theorem}[Complexity]
Consider the sparse grid space $\widehat{\bs V}_J^\lambda$ given 
by \eqref{eq:hat_V} with \eqref{eq:hat_I} and $\lambda\in (-\infty,1)$. 
Then there holds 
\[
  \dim\big(\widehat{\bs V}_J^\lambda\big) 
  \lesssim \begin{cases} 
  2^J, &\text{if $\lambda > 0$},\\
  2^J J, &\text{if $\lambda = 0$},\\
  2^{2J\frac{1-\lambda}{2-\lambda}}, &\text{if $\lambda < 0$}.
  \end{cases}
\]
\end{theorem}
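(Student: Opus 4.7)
The plan is to reduce the statement to evaluating a two-parameter sum and then carry out a careful case analysis in $\lambda$.

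\textbf{Step 1: Reduce to a counting sum.} Because the univariate kernel $\kernel$ is positive definite, the Gram vectors $\{\kernel(\cdot,x):x\in X_j\}$ are linearly independent, so $\dim V_j=|X_j|=2^j$ and $\dim(V_{j_1}\otimes V_{j_2})=2^{j_1+j_2}=2^{\|\bs j\|_1}$. Subadditivity of dimension on a sum of subspaces then gives
\[
\dim\big(\widehat{\bs V}_J^\lambda\big)\le\sum_{\bs j\in\mathcal{I}_J^\lambda}2^{\|\bs j\|_1}.
\]
The remaining task is purely combinatorial.

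\textbf{Step 2: Reparametrize and split.} I would group the index set by $m:=\|\bs j\|_\infty$ and $n:=\|\bs j\|_1$, noting that $m\le n\le 2m$, that for fixed $(m,n)$ there are at most two admissible $\bs j$, and that the defining inequality for $\mathcal{I}_J^\lambda$ reads $n\le J(1-\lambda)+\lambda m$. Together with $n\ge m$ and $\lambda<1$, this forces $0\le m\le J$. Summing the inner geometric progression over $n$ and bounding it by its largest term,
\[
\sum_{\bs j\in\mathcal{I}_J^\lambda}2^{\|\bs j\|_1}\lesssim\sum_{m=0}^{J}2^{\min\{2m,\,J(1-\lambda)+\lambda m\}}.
\]
The two expressions in the minimum coincide at $m=M^*:=\tfrac{(1-\lambda)J}{2-\lambda}\in[0,J]$, which naturally splits the sum as
\[
S_1:=\sum_{m=0}^{M^*}2^{2m},\qquad S_2:=\sum_{m=M^*+1}^{J}2^{J(1-\lambda)+\lambda m}.
\]

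\textbf{Step 3: Three regimes.} The sum $S_1$ is geometric with ratio $4$, so $S_1\lesssim 2^{2M^*}=2^{\frac{2(1-\lambda)J}{2-\lambda}}$. The sum $S_2$ is geometric with ratio $2^\lambda$, and one distinguishes:
\emph{(a)} if $\lambda>0$, $S_2$ is dominated by its top term at $m=J$, giving $S_2\lesssim 2^J$, while $\tfrac{2(1-\lambda)}{2-\lambda}<1$ implies $S_1\lesssim 2^J$ too;
\emph{(b)} if $\lambda=0$, each of the $\sim J$ terms of $S_2$ equals $2^J$, yielding $S_2\sim J\cdot 2^J$, which dominates $S_1\sim 2^J$;
\emph{(c)} if $\lambda<0$, $S_2$ is dominated by its bottom term at $m=M^*$, giving $S_2\lesssim 2^{J(1-\lambda)+\lambda M^*}=2^{\frac{2(1-\lambda)J}{2-\lambda}}$, matching $S_1$. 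Combining these three cases reproduces exactly the claimed trichotomy.

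The main obstacle is not conceptual but bookkeeping: identifying the correct cross-over level $M^*$, verifying $M^*\in[0,J]$ so that both partial sums are genuine, and correctly tracking which end of each geometric series dominates as $\lambda$ changes sign. Nothing beyond elementary geometric-series estimates is required.
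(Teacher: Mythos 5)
Your argument is correct and follows essentially the same route as the paper: bound the dimension by $\sum_{\bs j\in\mathcal{I}_J^\lambda}2^{\|\bs j\|_1}$, locate the cross-over level $J\frac{1-\lambda}{2-\lambda}$, and reduce everything to geometric series whose dominant end depends on the sign of $\lambda$ (the paper organizes the double sum over $(j_1,j_2)$ directly rather than over $(\|\bs j\|_\infty,\|\bs j\|_1)$, but the exponents and case analysis are identical). The only substantive difference is that you treat $\lambda=0$ explicitly, whereas the paper excludes that case and cites the standard sparse grid literature.
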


\begin{proof}
We assume without loss of generality that $\lambda\not=0$
as the result for the standard sparse grid is well-known, see 
\cite{BG} for example. 

We first note that the inequality in \eqref{eq:hat_I} can be
rewritten by means of $\|\bs j\|_1 = \min\{j_1,j_2\}+\max\{j_1,j_2\}$ 
and $\|\bs j\|_\infty = \max\{j_1,j_2\}$ as
\[
  \min\{j_1,j_2\} + \max\{j_1,j_2\}(1-\lambda) \le J(1-\lambda).
\]
Hence, the indices contained in $\mathcal{I}_J^{\lambda}$ 
are characterized by the inequality
\[
  \frac{\min\{j_1,j_2\}}{1-\lambda} + \max\{j_1,j_2\}\le J.
\]
The minimum and maximum switches at the level $j$
that satisfies
\begin{equation}\label{eq:CornerPoint}
\frac{j}{1-\lambda}+j = J\Rightarrow
j = J\frac{1-\lambda}{2-\lambda}.
\end{equation}
Hence, we can estimate\footnote{Note that we count here for simplicity 
certain indices twice. However, this does only enter into the generic constant.}
\begin{align*}
  \dim\big(\widehat{\bs V}_J^\lambda\big) 
  &\lesssim \sum_{j_2=0}^{J\frac{1-\lambda}{2-\lambda}} \sum_{j_1=0}^{J-\frac{j_2}{1-\lambda}} 2^{j_1+j_2}
  + \sum_{j_1=0}^{J\frac{1-\lambda}{2-\lambda}} \sum_{j_2=0}^{J-\frac{j_1}{1-\lambda}} 2^{j_1+j_2}\\
  &\lesssim\sum_{j_2=0}^{J\frac{1-\lambda}{2-\lambda}} 2^{j_2+J-\frac{j_2}{1-\lambda}}
  + \sum_{j_1=0}^{J\frac{1-\lambda}{2-\lambda}} 2^{j_1+J-\frac{j_1}{1-\lambda}}\\
  &\lesssim 2^J \Bigg[\sum_{j_2=0}^{J\frac{1-\lambda}{2-\lambda}} 2^{-j_2\frac{\lambda}{1-\lambda}}
  + \sum_{j_1=0}^{J\frac{1-\lambda}{2-\lambda}} 2^{-j_1\frac{\lambda}{1-\lambda}}\Bigg].
\end{align*}
If $\lambda > 0$, we have always negative exponents in the sums and
arrive therefore at the desired claim $\dim\big(\widehat{\bs V}_J^\lambda\big) 
\lesssim 2^J$. If $\lambda < 0$, the exponents in the sums are always 
positive which amounts to
\[
  \dim\big(\widehat{\bs V}_J^\lambda\big) 
  \lesssim 2^J 2^{-J\frac{\lambda}{2-\lambda}}
   = 2^{2J\frac{1-\lambda}{2-\lambda}}.
\]
\end{proof}

We see that the logarithm in the dimension of the optimized sparse 
grid space $\widehat{\bs V}_J^\lambda$ disappears for $\lambda > 0$,
whereas the dimension tends towards the dimension of the full tensor 
product space $V_J\otimes V_J$ for $\lambda\to-\infty$ 

\subsection{Approximation rates}
We shall next investigate the approximation power of the 
sparse grid space $\widehat{\bs V}_J^\lambda$. To this end, we
define the projection $\widehat{\bs Q}_J^\lambda: H_{\mix}^p(\mathbb{T}^2)
\to \widehat{\bs V}_J^\lambda$ onto the optimized sparse grid space 
$\widehat{\bs V}_J^\lambda$ by
\[
  \widehat{\bs Q}_J^\lambda 
  = \sum_{{\bs j}\in\mathcal{I}_J^{\lambda}} {\bs Q}_{\bs j}.
\]

\begin{figure}[htb]
\begin{center}
\begin{tikzpicture}[scale=0.6]

\fill[gray!40] (0,0) -- (6,0) -- (2,2) -- (0,6);
\draw[-] (0,6) -- (2,2); 
\draw[-] (2,2) -- (6,0);
\draw[-] (2,2) -- (2,7) node[above] {$\vdots$};
\draw[-] (2,2) -- (7,2) node[right] {$\cdots$};
    
\node at (4,4) {$C$};
\node at (5.5,1) {$A$};
\node at (1,5.5) {$B$};    
\node at (1.25,1.25) {$\mathcal{I}_J^{\lambda}$};    
      
\draw[thick,->] (0,0) -- (8,0) node[below right] {$j_1$};
\draw[thick,->] (0,0) -- (0,8) node[above left] {$j_2$};
\node[left] at (0,6) {$J$};                  
\node[left] at (0,2) {$J\frac{1-\lambda}{2-\lambda}$}; 
\draw[thick] (-0.15,6) -- (0.15,6);          
\draw[thick] (-0.15,2) -- (0.15,2);          
\node[below] at (6,0) {$J$};               
\node[below] at (2,0) {$J\frac{1-\lambda}{2-\lambda}$}; 
\draw[thick] (6,-0.15) -- (6,0.15);          
\draw[thick] (2,-0.15) -- (2,0.15);          
\end{tikzpicture}
\end{center}
\caption{\label{fig:index_sets}%
Visualization of the panelization of $\mathbb{N}_0^2$
into the index set $\mathcal{I}_J^{\lambda}$ and the index sets
which enter the sums $A$, $B$, and $C$.}
\end{figure}
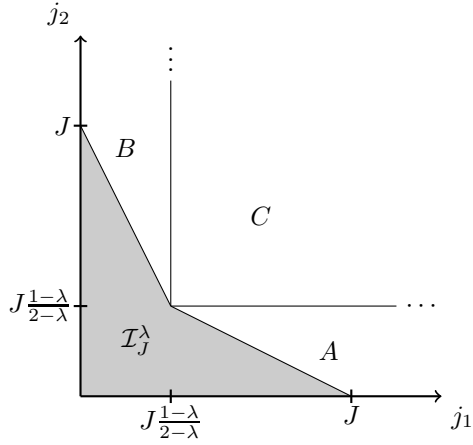

\begin{theorem}[Iso-mix-convergence]
\label{thm:iso-mix-conv}
Assume $0\le s\le p$ and $p\le t\le 2p$. Then, there holds
\[
  \big\|u-\widehat{\bs Q}_J^\lambda  u\big\|_{H_{\iso}^s(\mathbb{T}^2)}
	\lesssim 2^{-J(t-s)}\|u\|_{H_{\mix}^t(\mathbb{T}^2)}
\]
provided that $\lambda\in [0,\frac{s}{t})$. If $\lambda = 
\frac{s}{t}$, then an additional logarithmic factor appears, i.e.
\[
  \big\|u-\widehat{\bs Q}_J^\lambda  u\big\|_{H_{\iso}^s(\mathbb{T}^2)}
	\lesssim J 2^{-J(t-s)}\|u\|_{H_{\mix}^t(\mathbb{T}^2)}.
\] 
\end{theorem}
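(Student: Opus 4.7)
My plan is to exploit the telescoping identity
$u = \sum_{\bs j \in \mathbb{N}_0^2} \bs Q_{\bs j} u$, which is valid since $u\in H_{\mix}^t(\mathbb{T}^2)\subset H_{\mix}^p(\mathbb{T}^2)$, to rewrite the error as
\[
  u - \widehat{\bs Q}_J^\lambda u = \sum_{\bs j\notin\mathcal{I}_J^\lambda} \bs Q_{\bs j} u.
\]
The key building block is a pointwise bound on each summand in terms of the mixed norm of $u$. Applying Lemma~\ref{lem:inverse} with $s_2 = s$ and $s_1 = t_1 = t_2 = 0$ (admissible since $s\le p$) gives the purely isotropic inverse bound $\|\bs Q_{\bs j} u\|_{H_{\iso}^s(\mathbb{T}^2)} \lesssim 2^{s\|\bs j\|_\infty}\|\bs Q_{\bs j} u\|_{L^2(\mathbb{T}^2)}$, while the mixed Jackson inequality~\eqref{eq:approx2} with $t_1 = 0$ and $t_2 = t$ (admissible since $p\le t\le 2p$) yields $\|\bs Q_{\bs j} u\|_{L^2(\mathbb{T}^2)} \lesssim 2^{-t\|\bs j\|_1}\|u\|_{H_{\mix}^t(\mathbb{T}^2)}$. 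Combining the two produces the central pointwise estimate
\[
  \|\bs Q_{\bs j} u\|_{H_{\iso}^s(\mathbb{T}^2)} \lesssim 2^{s\|\bs j\|_\infty - t\|\bs j\|_1}\|u\|_{H_{\mix}^t(\mathbb{T}^2)}.
\]

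The remaining task is to sum this geometric weight over $\bs j\notin\mathcal{I}_J^\lambda$ and show it is controlled by $2^{-(t-s)J}$ up to the indicated logarithmic factor. I would first rewrite the defining inequality of $\mathcal{I}_J^\lambda$ in the equivalent form $\frac{\min\{j_1,j_2\}}{1-\lambda}+\max\{j_1,j_2\}\le J$, see~\eqref{eq:CornerPoint}, and then split the complement into the three regions $A$, $B$, $C$ indicated in Figure~\ref{fig:index_sets}, which meet at the corner point $J\frac{1-\lambda}{2-\lambda}$. By the symmetry of the weight in $(j_1,j_2)$, it suffices to treat $A$ and $C$. In $A$, where $j_2\le J\frac{1-\lambda}{2-\lambda}$ and $j_1>J-j_2/(1-\lambda)\ge j_2$, we have $\|\bs j\|_\infty=j_1$, so performing the inner geometric series in $j_1$ yields
\[
  \sum_{\bs j\in A} 2^{s\|\bs j\|_\infty - t\|\bs j\|_1}
  \lesssim 2^{-(t-s)J}\sum_{j_2=0}^{J\frac{1-\lambda}{2-\lambda}} 2^{j_2\cdot \frac{t\lambda-s}{1-\lambda}}.
\]
In $C$, where both $j_1,j_2>J\frac{1-\lambda}{2-\lambda}$, a direct geometric summation gives a bound $\lesssim 2^{-(2t-s)J\frac{1-\lambda}{2-\lambda}}$, and the elementary equivalence $(2t-s)\frac{1-\lambda}{2-\lambda}\ge t-s \Leftrightarrow \lambda\le s/t$ shows that this contribution is already $\lesssim 2^{-(t-s)J}$ throughout the admissible parameter range.

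The main obstacle, and the source of the case distinction in the theorem, is the outer series over $j_2$ in the $A$-estimate. Its exponent $\frac{t\lambda-s}{1-\lambda}$ is strictly negative exactly when $\lambda<s/t$, so the sum is uniformly bounded in $J$ and delivers the clean rate $2^{-(t-s)J}$; it vanishes when $\lambda=s/t$, so the sum reduces to counting terms and produces the logarithmic factor $J$; finally, it would be positive for larger $\lambda$, which is therefore excluded from the statement. Adding the contributions from $A$, $B$, and $C$ by the triangle inequality then gives the two announced estimates.
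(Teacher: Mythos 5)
Your proposal is correct and follows essentially the same route as the paper: the same telescoping decomposition over $\bs j\notin\mathcal{I}_J^\lambda$, the same combination of the isotropic inverse estimate with the mixed Jackson inequality \eqref{eq:approx2} to obtain the weight $2^{s\|\bs j\|_\infty-t\|\bs j\|_1}$, and the same splitting into the regions $A$, $B$, $C$ with geometric summation and the case distinction on the sign of $\frac{t\lambda-s}{1-\lambda}$. The only cosmetic difference is that you invoke Lemma~\ref{lem:inverse} with $t_1=t_2=0$ where the paper cites Lemma~\ref{lem:inverse_iso} directly, and you make explicit the equivalence $(2t-s)\frac{1-\lambda}{2-\lambda}\ge t-s\Leftrightarrow\lambda\le s/t$, which the paper only states in one direction.
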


\begin{proof}
By combining the isotropic inverse estimate from 
Lemma~\ref{lem:inverse_iso} with the approximation property
\eqref{eq:approx2} we find that
\begin{align*}
  \|u-\widehat{\bs Q}_J^\lambda  u\|_{H_{\iso}^s(\mathbb{T}^2)}
  &\le\sum_{{\bs j}\not\in \mathcal{I}_J^{\lambda}}\|{\bf Q}_{\bs j}u \|_{H_{\iso}^s(\mathbb{T}^2)}\\
  &\lesssim\sum_{{\bs j}\not\in \mathcal{I}_J^{\lambda}}
  	2^{s\|{\bs j}\|_{\infty}}\|{\bf Q}_{\bs j}u\|_{L^2(\mathbb{T}^2)}\\
  &\lesssim\sum_{{\bs j}\not\in \mathcal{I}_J^{\lambda}}
  	2^{s\|{\bs j}\|_{\infty}-t\|{\bs j}\|_{1}}\|u\|_{H_{\mix}^t(\mathbb{T}^2)}.
\end{align*}
We obtain\footnote{Here and in the following, the summation 
limits are in general no natural numbers and must of course 
be rounded properly. We leave this to the reader to avoid 
cumbersome floor/ceil-notations.}
\begin{align*}
  \big\|u-\widehat{\bs Q}_J^\lambda  u\big\|_{H_{\iso}^s(\mathbb{T}^2)}
  &\lesssim \sum_{j_2=0}^{J\frac{1-\lambda}{2-\lambda}} \sum_{j_1=J-\frac{j_2}{1-\lambda}}^\infty 
  2^{s\|{\bs j}\|_{\infty}-t\|{\bs j}\|_{1}}\|u\|_{H_{\mix}^t(\mathbb{T}^2)}\\
  &+ \sum_{j_1=0}^{J\frac{1-\lambda}{2-\lambda}} \sum_{j_2=J-\frac{j_1}{1-\lambda}}^\infty
  2^{s\|{\bs j}\|_{\infty}-t\|{\bs j}\|_{1}}\|u\|_{H_{\mix}^t(\mathbb{T}^2)}\\
  &+ \sum_{j_1=J\frac{1-\lambda}{2-\lambda}}^\infty \sum_{j_2=J\frac{1-\lambda}{2-\lambda}}^\infty
  2^{s\|{\bs j}\|_{\infty}-t\|{\bs j}\|_{1}}\|u\|_{H_{\mix}^t(\mathbb{T}^2)}
  = A + B + C,
\end{align*}
where obviously $A = B$ holds, compare Figure~\ref{fig:index_sets}
for an illustration.

To estimate $B$ we use
\begin{align*}
B&\lesssim \sum_{j_1=0}^{J\frac{1-\lambda}{2-\lambda}} \sum_{j_2=J-\frac{j_1}{1-\lambda}}^\infty 
2^{(s-t)j_2-tj_1}\|u\|_{H_{\mix}^t(\mathbb{T}^2)}\\
&\lesssim \sum_{j_1=0}^{J\frac{1-\lambda}{2-\lambda}} 2^{(s-t)(J-\frac{j_1}{1-\lambda})-tj_1} 
\underbrace{\sum_{j_2=0}^\infty 2^{(s-t)j_2}}_{\lesssim 1}\|u\|_{H_{\mix}^t(\mathbb{T}^2)}\\
&\lesssim 2^{(s-t)J} \sum_{j_1=0}^{J\frac{1-\lambda}{2-\lambda}} 
2^{(t-s)\frac{j_1}{1-\lambda}-tj_1} \|u\|_{H_{\mix}^t(\mathbb{T}^2)}.
\end{align*}
If $0\le \lambda < \frac{s}{t}$, the exponent is negative and hence
the sum is uniformly bounded by a constant, leading to
\[
B\lesssim 2^{(s-t)J}\|u\|_{H_{\mix}^t(\mathbb{T}^2)}.
\]
If $\lambda = \frac{s}{t}$, the exponent in the sum is equal to zero
such that the sum is proportional to $J$ which yields an additional 
logarithmic factor, i.e.
\[
B\lesssim J 2^{(s-t)J}\|u\|_{H_{\mix}^t(\mathbb{T}^2)}.
\]

For the expression $C$ we have
\begin{align*}
  \frac{C}{2}&\lesssim \sum_{j_1=J\frac{1-\lambda}{2-\lambda}}^\infty 
  \sum_{j_2=j_1}^\infty
  2^{(s-t)j_2-tj_1}\|u\|_{H_{\mix}^t(\mathbb{T}^2)}\\
  &\lesssim\sum_{j_1=J\frac{1-\lambda}{2-\lambda}}^\infty
  2^{(s-2t)j_1} \underbrace{\sum_{j_2=0}^\infty 2^{(s-t)j_2}}_{\lesssim 1}\|u\|_{H_{\mix}^t(\mathbb{T}^2)}\\
  &\lesssim 2^{J(s-2t)\frac{1-\lambda}{2-\lambda}}
  \underbrace{\sum_{j_1=0}^\infty 2^{(s-2t)j_1}}_{\lesssim 1}\|u\|_{H_{\mix}^t(\mathbb{T}^2)}
  \lesssim 2^{J(s-t)}\|u\|_{H_{\mix}^t(\mathbb{T}^2)},
\end{align*}
where we used in the the last step that
\[
  (2t-s)\frac{1-\lambda}{2-\lambda} \ge t-s
\]
whenever $\lambda\in [0,\frac{s}{t}]$.

Putting the estimates of $A=B$ and $C$ together yields the desired result.
\end{proof}

\begin{remark}
We note that the above proof is inherently different from that in 
\cite{GK1,GK2}, which is based on wavelets, since we now cannot 
exploit norm equivalences any more for our kernel approach. As 
a consequence, we obtain the logarithmic factor $J$ for the choice 
$\lambda = \frac{s}{t}$, which is not contained in the error estimate 
of \cite{GK1,GK2}. Nevertheless, in case of $\lambda\in (0,\frac{s}{t})$, 
neither the rate of approximation nor the number $N$ of degrees of 
freedom in $\widehat{\bs V}_J^\lambda$ exhibit a logarithmic factor. 
This means that the convergence rate does not suffer from curse of 
dimension, i.e., we obtain the same rate as for the univariate kernel 
interpolation:
\[
\big\|u-\widehat{\bs Q}_J^\lambda  u\big\|_{H_{\iso}^s(\mathbb{T}^2)}
	\lesssim N^{-(t-s)}\|u\|_{H_{\mix}^t(\mathbb{T}^2)}.
\]
\end{remark}

Having this result proven, we can easily generalize it
to Sobolev spaces of hybrid regularity.

\begin{theorem}[General convergence]
\label{thm:convergence}
Assume $0\le s_2\le s_1\le p$ and $0\le t_1\le t_2\le 2p$
such that we have\footnote{The desired embedding amounts to 
the inequalities $s_1+t_1\le p\le\frac{s_2}{2}+t_2$ and 
$s_2+t_2\le 2p$.}
\[
H_{\mix}^{2p}(\mathbb{T}^2)\subset H_{\isomix}^{s_2,t_2}(\mathbb{T}^2)
\subset H_{\mix}^p(\mathbb{T}^2)\subset H_{\isomix}^{s_1,t_1}(\mathbb{T}^2)
\subset L^2(\mathbb{T}^2).
\]
Then there holds
\[
  \big\|u-\widehat{\bs Q}_J^\lambda  u\big\|_{H_{\isomix}^{s_1,t_1}(\mathbb{T}^2)}
  \lesssim 2^{-J((t_2-t_1)-(s_1-s_2))}\|u\|_{H_{\isomix}^{s_2,t_2}(\mathbb{T}^2)}
\]
provided that $\lambda\in [0,\frac{s_1-s_2}{t_2-t_1})$. If $\lambda = 
\frac{s_1-s_2}{t_2-t_1}$, an additional logarithmic factor appears, i.e.
\[
  \big\|u-\widehat{\bs Q}_J^\lambda  u\big\|_{H_{\isomix}^{s_1,t_1}(\mathbb{T}^2)}
  \lesssim J 2^{-J((t_2-t_1)-(s_1-s_2))}\|u\|_{H_{\isomix}^{s_2,t_2}(\mathbb{T}^2)}.
\]
\end{theorem}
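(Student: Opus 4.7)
The plan is to mirror the proof of Theorem~\ref{thm:iso-mix-conv} almost verbatim, with the isotropic inverse of Lemma~\ref{lem:inverse_iso} and the mixed Jackson estimate \eqref{eq:approx2} replaced by their hybrid counterparts. I would start from the telescopic expansion $u - \widehat{\bs Q}_J^\lambda u = \sum_{\bs j \notin \mathcal{I}_J^\lambda} \bs Q_{\bs j} u$ and apply the triangle inequality, reducing the claim to a bound on $\sum_{\bs j \notin \mathcal{I}_J^\lambda} \|\bs Q_{\bs j} u\|_{H_{\isomix}^{s_1,t_1}(\mathbb{T}^2)}$.

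Each level contribution would be controlled in two stages. First, I invoke the shifted inverse estimate \eqref{eq:inverse_shifted} to descend from isotropic regularity $s_1$ down to $s_2$ while keeping the mixed index $t_1$ fixed, picking up the factor $2^{(s_1-s_2)\|\bs j\|_\infty}$; the admissibility requirement $s_1+t_1\le p$ is part of the embedding hypotheses. Second, I would establish the hybrid Jackson inequality
\[
\|\bs Q_{\bs j} u\|_{H_{\isomix}^{s_2,t_1}(\mathbb{T}^2)} \lesssim 2^{-(t_2-t_1)\|\bs j\|_1}\,\|u\|_{H_{\isomix}^{s_2,t_2}(\mathbb{T}^2)}
\]
by unfolding the hybrid norm via the intersection $H_{\isomix}^{s_2,t}=(H^t\otimes H^{s_2+t})\cap(H^{s_2+t}\otimes H^t)$ and applying the univariate \eqref{eq:approx} in each factor of each tensor summand. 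Multiplying the two per-level bounds yields $\|\bs Q_{\bs j} u\|_{H_{\isomix}^{s_1,t_1}(\mathbb{T}^2)} \lesssim 2^{(s_1-s_2)\|\bs j\|_\infty-(t_2-t_1)\|\bs j\|_1}\,\|u\|_{H_{\isomix}^{s_2,t_2}(\mathbb{T}^2)}$, which is structurally identical to the driving estimate in Theorem~\ref{thm:iso-mix-conv} after the substitutions $s\leftrightarrow s_1-s_2$ and $t\leftrightarrow t_2-t_1$.

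The remaining summation over $\bs j\notin\mathcal{I}_J^\lambda$ would therefore proceed verbatim by the $A+B+C$ decomposition depicted in Figure~\ref{fig:index_sets}: the geometric series in $A$ and $B$ converges precisely when $\lambda<(s_1-s_2)/(t_2-t_1)$ and acquires the factor $J$ at equality, while $C$ is controlled by the same corner-point calculation, producing the advertised rate $2^{-J((t_2-t_1)-(s_1-s_2))}$ with the extra $J$ at the critical $\lambda$. The main obstacle I anticipate is the careful verification of the hybrid Jackson inequality: the tensor product argument needs the four univariate ranges $0\le t_1\le p\le t_2\le 2p$ and $0\le s_2+t_1\le p\le s_2+t_2\le 2p$ to lie in the domain of \eqref{eq:approx}, and extracting these precisely from the embedding chain in the hypotheses (notably $s_1+t_1\le p$, $s_2+t_2\le 2p$, together with $t_2\ge p$ which is needed for the embedding $H_{\isomix}^{s_2,t_2}\subset H_{\mix}^p$) is the only delicate bookkeeping before the rest of the proof reduces to an arithmetic reprise of the iso-mix case.
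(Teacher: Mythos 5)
Your route is the same as the paper's: the paper's proof is exactly the three-line chain you describe (triangle inequality over ${\bs j}\notin\mathcal{I}_J^\lambda$, the shifted inverse estimate \eqref{eq:inverse_shifted} to pass from $H_{\isomix}^{s_1,t_1}$ to $H_{\isomix}^{s_2,t_1}$ with the factor $2^{(s_1-s_2)\|{\bs j}\|_\infty}$, a hybrid Jackson estimate giving $2^{-(t_2-t_1)\|{\bs j}\|_1}$, and then the $A+B+C$ summation of Theorem~\ref{thm:iso-mix-conv} with $s\mapsto s_1-s_2$ and $t\mapsto t_2-t_1$), and you correctly identify the hybrid Jackson inequality as the one step that needs independent verification, which the paper states without proof. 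However, your bookkeeping for that step contains an error: you assert that $t_2\ge p$ follows from the embedding $H_{\isomix}^{s_2,t_2}(\mathbb{T}^2)\subset H_{\mix}^p(\mathbb{T}^2)$, but by the paper's own footnote that embedding only amounts to $p\le\frac{s_2}{2}+t_2$, which permits $t_2<p$ whenever $s_2>0$ (e.g.\ $s_1=p$, $t_1=0$, $s_2=0.9p$, $t_2=0.6p$ satisfies all hypotheses and gives a nonempty interval $[0,\frac{s_1-s_2}{t_2-t_1})$). In that regime the factor-wise application of \eqref{eq:approx} with the pair $(t_1,t_2)$ falls outside its stated validity range $t_1\le p\le t_2\le 2p$, so your tensor-product derivation of the hybrid Jackson inequality does not go through as written; substituting the admissible pair $(t_1,p)$ changes the right-hand norm to one not controlled by $H_{\isomix}^{s_2,t_2}$. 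To be fair, the paper's own proof is silent on this point as well, so this is a gap you inherit rather than introduce, but your explicit (and incorrect) claim that the hypotheses supply $t_2\ge p$ should be repaired, either by adding $t_2\ge p$ as an assumption or by a genuinely different argument for the hybrid Jackson estimate when $t_2<p$.
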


\begin{proof}
In view of Lemma~\ref{lem:inverse}, we find
\begin{align*}
  \big\|u-\widehat{\bs Q}_J^\lambda  u\big\|_{H_{\isomix}^{s_1,t_1}(\mathbb{T}^2)}
  &\le\sum_{{\bs j}\not\in \mathcal{I}_J^{\lambda}}\|{\bf Q}_{\bs j}u \|_{H_{\isomix}^{s_1,t_1}(\mathbb{T}^2)}\\
  &\lesssim\sum_{{\bs j}\not\in \mathcal{I}_J^{\lambda}}
  	2^{(s_1-s_2)\|{\bs j}\|_{\infty}}\|{\bf Q}_{\bs j}u\|_{H_{\isomix}^{s_2,t_1}(\mathbb{T}^2)}\\
  &\lesssim\sum_{{\bs j}\not\in \mathcal{I}_J^{\lambda}}
  	2^{(s_1-s_2)\|{\bs j}\|_{\infty}-(t_2-t_1)\|{\bs j}\|_{1}}\|u\|_{H_{\isomix}^{s_2,t_2}(\mathbb{T}^2)}.
\end{align*}
Proceeding now in complete analogy to the proof of 
Theorem~\ref{thm:iso-mix-conv}, but with $s_1-s_2$ instead 
of $s$ and $t_2-t_1$ instead of $t$, yields the desired claim.
\end{proof}

\begin{remark}
The condition $s_1\ge s_2$ is essential for the present 
result, meaning that we measure the error in a space which
has a higher isotropic smoothness than the function to be
approximated. If $s_1 = s_2$, we find $\lambda = 0$, such
that the interval $[0,\frac{s_1-s_2}{t_2-t_1})$ is empty and we 
always obtain the logarithmic factor $J$ in the approximation 
error. We should further mention that the result of Theorem 
\ref{thm:convergence} could also be shown in \cite{GK1,GK2}
for $s_1<s_2$ due to the norm equivalences of wavelet bases.
This is however not possible in our situation using kernel 
interpolants. Nevertheless, $s_1<s_2$ also implies $\lambda < 0$ 
and hence results in associated optimized sparse grids which 
contain significantly more points than the regular sparse grid. 
As a consequence, the cost complexity would be higher 
than (poly-) loglinear in this situation.
\end{remark}

\section{Extension to higher dimensions}
\label{sct:arbitrary}
\subsection{Sobolev spaces of hybrid regularity}
In higher dimensions, the Sobolev spaces of dominating 
mixed derivatives are defined by
\[
  H_{\mix}^t(\mathbb{T}^d) := \bigotimes_{j=1}^d H_{\mix}^t(\mathbb{T}).
\]
This means that $H_{\mix}^t(\mathbb{T}^d)$ contains all 
periodic functions $f\in L^2(\mathbb{T}^d)$ with bounded 
derivatives $\|\partial^{\bs\alpha} f\|_{L^2(\mathbb{T}^d)}
<\infty$ for all $\|\bs\alpha\|_\infty\le t$. In contrast, 
the classical, isotropic Sobolev space
\[
  H_{\iso}^s(\mathbb{T}^d) := \bigcap_{i=1}^d 
  \Bigg[\bigg(\bigotimes_{j=1}^{i-1} L^2(\mathbb{T})\bigg)
  \otimes H^s(\mathbb{T})\otimes 
  \bigg(\bigotimes_{j=i+1}^{d} L^2(\mathbb{T})\bigg)\Bigg]
\]
contains all periodic function with bounded derivatives
$\|\partial^{\bs\alpha} f\|_{L^2(\mathbb{T}^d)}<\infty$
for all $\|\bs\alpha\|_1\le s$.

The Sobolev space of hybrid regularity can be defined
in analogy to \eqref{eq:GK-spaces} as
\[
  H_{\isomix}^{s,t}(\mathbb{T}^d) := \bigcap_{i=1}^d 
  \Bigg[\bigg(\bigotimes_{j=1}^{i-1} H^t(\mathbb{T})\bigg)
  \otimes H^{s+t}(\mathbb{T})\otimes 
  \bigg(\bigotimes_{j=i+1}^{d} H^t(\mathbb{T})\bigg)\Bigg].
\]
It consists of all periodic functions $f\in L^2(\mathbb{T}^d)$ with 
derivatives $\|\partial^{\bs\alpha} f\|_{H_{\mix}^t(\mathbb{T}^d)}
<\infty$ bounded in $H_{\mix}^t(\mathbb{T}^d)$ for all 
$\|\bs\alpha\|_1\le s$. 

Note that there holds the series of embeddings
\begin{equation}\label{eq:embedding}
 H_{\mix}^{s+\frac{t}{d}}(\mathbb{T}^d)
 \subset H_{\iso}^{s+\frac{t}{d}}(\mathbb{T}^d) 
 \subset H_{\isomix}^{s,t}(\mathbb{T}^d) \subset H_{\iso}^s(\mathbb{T}^d)
 \subset H_{\mix}^{\frac{s}{d}}(\mathbb{T}^d)
\end{equation}
for all $s,t\ge 0$ and $d\ge 1$.

\subsection{Kernel interpolation}
We now investigate kernel interpolation in $H_{\mix}^p(\mathbb{T}^d)$.
The reproducing kernel $\bs\kernel(\bs x,\bs y)$ in $H_{\mix}^p(\mathbb{T}^d)$ 
is given as the $d$-fold tensor product of the univariate kernel, which means that
\[
  \bs\kernel(\bs x,\bs y) = \prod_{k=1}^d \kernel(x_k,y_k).
\]
We define the projections ${\bs P}_{\bs j}$ and 
${\bs Q}_{\bs j}$ by
\[
  {\bs P}_{\bs j} := P_{j_1}\otimes\cdots\otimes P_{j_d},\quad
  {\bs Q}_{\bs j} := Q_{j_1}\otimes\cdots\otimes Q_{j_d}.
\]
Given a function $u\in H_{\mix}^p(\mathbb{T}^d)$, the
kernel interpolant 
\[
  {\bs P}_{\bs j} u = \sum_{{\bs k}\in\bs\Delta_{\bs j}}
  	u_{{\bs j},{\bs k}}\bs\kernel(\bs x,\bs x_{{\bs j},{\bs k}})
\]
with respect to the \emph{(full) tensor product grid} 
\[
  {\bs X}_{\bs j} := \bigtimes_{i=1}^d X_{j_i}
  	= \bigg\{{\bf x}_{{\bs j},{\bs k}} = (x_{j_1,k_1},\ldots,x_{j_d,k_d}):
  		{\bs k}\in{\bs\Delta}_{\bs j}:=\bigtimes_{\ell=1}^d \Delta_{j_\ell,k_\ell}\bigg\}
\]
is obtained by solving the system
\begin{equation}\label{eq:systemdD}
   ({\bs K}_{j_1}\otimes\cdots\otimes {\bs K}_{j_d}) {\bs u}_{\bs j} = {\bs f}_{\bs j},
\end{equation}
where the univariate kernel matrices ${\bs K}_{j_\ell} = [\kernel(x_{j_\ell,k},
x_{j_\ell,k'})]_{k,k'\in\Delta_{j_\ell}}$ are given by \eqref{eq:1Ddiscrete} and
\[
{\bs u}_{\bs j} = [u_{{\bs j},{\bs k}}]_{{\bs k}\in\bs\Delta_{\bs j}},\quad
{\bs f}_{\bs j} = [u({\bs x}_{{\bs j},{\bs k}})]_{{\bs k}\in\bs\Delta_{\bs j}}.
\]
We again note that the linear system \eqref{eq:system2D} 
of equations can efficiently be solved by using the fast 
Fourier transform in combination with well-known tensor 
and matrizication techniques found in e.g.~\cite{Hackbusch}.

Obviously, the kernel interpolant is the best approximation of
$u\in H_{\mix}^p(\mathbb{T}^d)$ in the subspace
\[
  {\bs V}_{\bs j} = \bigotimes_{i=1}^d V_{j_i} 
  = \spn\{\kernel(\cdot,{\bs x}):
  	{\bs x}\in {\bs X}_{\bs j}\}\subset H_{\mix}^p(\mathbb{T}^d)
\]
with respect to the $H_{\mix}^p(\mathbb{T}^d)$-norm.

\subsection{Bernstein and Jackson inequalities}
We find the following straightforward extension of 
Lemma~\ref{lem:inverse} to the $d$-dimensional setting:

\begin{lemma}[General inverse estimate]
For $0\le s_1\le s_2\le p$ and $0\le t_1\le t_2\le p$ such that 
$s_2+t_2\le p$ and $u\in H_{\mix}^p(\mathbb{T}^d)$, there holds
\[
  \|{\bs Q}_{\bs j} u\|_{H_{\isomix}^{s_2,t_2}(\mathbb{T}^d)}
  \lesssim 2^{(s_2-s_1)\|{\bf j}\|_\infty+(t_2-t_1)\|{\bf j}\|_1}
  \|{\bs Q}_{\bs j} u\|_{H_{\isomix}^{s_1,t_1}(\mathbb{T}^d)}
\]
for all ${\bs j}\in\mathbb{N}_0^d$.
\end{lemma}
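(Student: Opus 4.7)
The plan is to reproduce, step by step, the argument used for Lemma~\ref{lem:inverse} in the bivariate case, with its two one-directional ingredients (a mixed-smoothness inverse inequality and a shifted isotropic inverse estimate) replaced by their $d$-dimensional counterparts. Since ${\bs Q}_{\bs j}$ is already a tensor product of $d$ univariate projections, both ingredients will follow from the univariate Bernstein inequality \eqref{eq:inverse} by standard tensor product techniques, after which the two steps can simply be concatenated.

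First I would establish the $d$-dimensional mixed inverse inequality
\[
   \|{\bs Q}_{\bs j} u\|_{H_{\mix}^{t_2}(\mathbb{T}^d)}
   \lesssim 2^{(t_2-t_1)\|{\bs j}\|_1}\|{\bs Q}_{\bs j} u\|_{H_{\mix}^{t_1}(\mathbb{T}^d)}
\]
by applying \eqref{eq:inverse} coordinate-wise. Since the mixed norm tensorizes, each direction contributes a factor $2^{(t_2-t_1)j_i}$, and the product telescopes to $2^{(t_2-t_1)\|{\bs j}\|_1}$. Next I would prove the $d$-dimensional shifted isotropic inverse estimate
\[
  \|{\bs Q}_{\bs j} u\|_{H_{\isomix}^{s_2,t_2}(\mathbb{T}^d)}
  \lesssim 2^{(s_2-s_1)\|{\bs j}\|_\infty}\|{\bs Q}_{\bs j} u\|_{H_{\isomix}^{s_1,t_2}(\mathbb{T}^d)}
\]
in complete analogy with the proofs of Lemma~\ref{lem:inverse_iso} and \eqref{eq:inverse_shifted}: the space $H_{\isomix}^{s_2,t_2}(\mathbb{T}^d)$ can be written as the intersection of $d$ slot-wise tensor products carrying an $H^{s_2+t_2}$ in exactly one slot and an $H^{t_2}$ in the others, and applying \eqref{eq:inverse} in the $i$-th slot picks up a factor $2^{(s_2-s_1)j_i}\le 2^{(s_2-s_1)\|{\bs j}\|_\infty}$. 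The condition $s_2+t_2\le p$ is precisely what guarantees that \eqref{eq:inverse} is applicable in that isotropic direction, and $t_2\le p$ takes care of the remaining slots.

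Concatenating these two estimates, first shifting $s_2\to s_1$ via the isotropic step and then shifting $t_2\to t_1$ via the mixed step, produces the claimed bound with the combined exponent $(s_2-s_1)\|{\bs j}\|_\infty + (t_2-t_1)\|{\bs j}\|_1$. No genuine obstacle is expected: the argument is a verbatim transcription of the bivariate proof of Lemma~\ref{lem:inverse}, and the only piece of bookkeeping is to check that the intermediate space $H_{\isomix}^{s_1,t_2}(\mathbb{T}^d)$ is well-defined and embedded into $H_{\mix}^p(\mathbb{T}^d)$ under the hypotheses, which is immediate from $s_1\le s_2$ together with $s_2+t_2\le p$.
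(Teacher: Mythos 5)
Your proposal is correct and follows exactly the route the paper intends: the paper gives no separate proof for the $d$-dimensional lemma, calling it a straightforward extension of the bivariate Lemma~\ref{lem:inverse}, whose proof is precisely your two-step concatenation of a shifted isotropic inverse estimate (slot-wise in the $d$-fold intersection, yielding the $\|{\bs j}\|_\infty$ factor) followed by a mixed inverse estimate (coordinate-wise, yielding the $\|{\bs j}\|_1$ factor). Your bookkeeping of the hypotheses ($s_2+t_2\le p$ for the isotropic slot, $t_2\le p$ for the remaining slots, and $s_1+t_2\le p$ for the intermediate space) matches what the argument requires.
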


Now, we introduce for $\lambda\in (-\infty,1)$ the index set
\[
  \mathcal{I}_J^{\lambda} = \{{\bs j}\in\mathbb{N}_0^d:
  	\|{\bs j}\|_1-\lambda\|{\bs j}\|_\infty\le J(1-\lambda)\}.
\]
This yields the \emph{optimized sparse grid space}
\[
  \widehat{\bs V}_J^\lambda := \sum_{{\bs j}\in\mathcal{I}_J^{\lambda}}
  {\bs V}_{\bs j} = \sum_{{\bs j}\in\mathcal{I}_J^{\lambda}}\bigotimes_{i=1}^d V_{j_i},
\]
which has the dimension
\[
  \dim\big(\widehat{\bs V}_J^\lambda\big) 
  \lesssim \begin{cases} 
  2^J, &\text{if $\lambda > 0$},\\
  2^J J^{d-1}, &\text{if $\lambda = 0$},\\
  2^{Jd\frac{1-\lambda}{d-\lambda}}, &\text{if $\lambda < 0$}.
  \end{cases}
\]
The last case is seen as follows: If we are looking for the
largest level ${\bf j}\in\mathbb{N}_0^d$ with $j_1=\cdots = j_d =: j$
such that the index ${\bs j}$ is still contained in $\mathcal{I}_J^{\lambda}$,
we find
\[
  d j - \lambda j = J(1-\lambda) \Rightarrow j = J\frac{1-\lambda}{d-\lambda}
\]
in analogy to \eqref{eq:CornerPoint}. This is the largest full 
tensor product space ${\bs V}_{\bs j} = \bigotimes_{j=1}^d V_j$ 
that is contained in $\widehat{\bs V}_J^\lambda$. It has 
\[
  \dim({\bs V}_{\bs j}) = \dim(V_j)^d = 2^{Jd\frac{1-\lambda}{d-\lambda}}
\]
degrees of freedom and gives the bound for
$\dim\big(\widehat{\bs V}_J^\lambda\big)$ in the case 
$\lambda < 0$, compare also \cite{GK1,GK2}.

Finally, the generalization of Theorem~\ref{thm:convergence}
to the $d$-dimensional setting reads as follows, compare
also \cite{GK1,GK2}:

\begin{theorem}[General convergence]
Assume $0\le s_2\le s_1\le p$ and $0\le t_1\le t_2\le 2p$
such that\footnote{In view of \eqref{eq:embedding}, the 
desired embedding amounts to the inequalities $s_1+t_1\le p
\le\frac{s_2}{d}+t_2$ and $s_2+t_2\le 2p$.}
\begin{equation}\label{eq:inclusions}
H_{\mix}^{2p}(\mathbb{T}^d)\subset H_{\isomix}^{s_2,t_2}(\mathbb{T}^d)
\subset H_{\mix}^p(\mathbb{T}^d)\subset H_{\isomix}^{s_1,t_1}(\mathbb{T}^d)
\subset L^2(\mathbb{T}^d).
\end{equation}
Then there holds
\[
  \big\|u-\widehat{\bs Q}_J^\lambda  u\big\|_{H_{\isomix}^{s_1,t_1}(\mathbb{T}^d)}
  \lesssim 2^{-J((t_2-t_1)-(s_1-s_2))}\|u\|_{H_{\isomix}^{s_2,t_2}(\mathbb{T}^d)}
\]
provided that $\lambda\in [0,\frac{s_1-s_2}{t_2-t_1})$. If $\lambda = 
\frac{s_1-s_2}{t_2-t_1}$, an additional polylogarithmic factor appears, i.e.
\[
  \big\|u-\widehat{\bs Q}_J^\lambda  u\big\|_{H_{\isomix}^{s_1,t_1}(\mathbb{T}^d)}
  \lesssim J^{d-1} 2^{-J((t_2-t_1)-(s_1-s_2))}\|u\|_{H_{\isomix}^{s_2,t_2}(\mathbb{T}^d)}.
\]
\end{theorem}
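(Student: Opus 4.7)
The plan is to mimic the bivariate proof of Theorem \ref{thm:convergence}, with the combinatorial sum decomposition upgraded from the $\{A,B,C\}$ split of Figure \ref{fig:index_sets} to its $d$-dimensional analogue. First, I would use the telescoping identity $u = \sum_{{\bs j}\in\mathbb{N}_0^d}{\bs Q}_{\bs j} u$ and the triangle inequality to reduce to
\[
\bigl\|u - \widehat{\bs Q}_J^\lambda u\bigr\|_{H_{\isomix}^{s_1,t_1}(\mathbb{T}^d)} \le \sum_{{\bs j}\not\in \mathcal{I}_J^\lambda} \|{\bs Q}_{\bs j} u\|_{H_{\isomix}^{s_1,t_1}(\mathbb{T}^d)}.
\]
Next, the $d$-dimensional general inverse estimate (valid since $s_1+t_1\le p$ by \eqref{eq:inclusions}) moves the target norm from $H_{\isomix}^{s_1,t_1}$ down to $H_{\isomix}^{s_2,t_1}$ at the cost of $2^{(s_1-s_2)\|{\bs j}\|_\infty}$, and a tensor-product Jackson estimate (obtained by applying \eqref{eq:approx} coordinatewise to each partial derivative of order $\le s_2$, legal since $t_1\le p\le t_2\le 2p$) gains $2^{-(t_2-t_1)\|{\bs j}\|_1}$ in exchange for promoting $u\in H_{\isomix}^{s_2,t_2}$. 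The whole task thus reduces to bounding
\[
S := \sum_{{\bs j}\not\in \mathcal{I}_J^\lambda} 2^{\alpha\|{\bs j}\|_\infty - \beta\|{\bs j}\|_1},\qquad \alpha := s_1-s_2\ge 0,\ \beta := t_2-t_1>0.
\]

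For the estimate of $S$, I would exploit the permutation symmetry of $\mathcal{I}_J^\lambda$ and split the complement according to which coordinate attains $\|{\bs j}\|_\infty$, yielding $d$ \emph{wing} regions (one dominating index) plus a single \emph{corner} region where all coordinates exceed the critical value $J^\star := J(1-\lambda)/(d-\lambda)$ identified in the dimension estimate. In the wing with $j_d = \|{\bs j}\|_\infty$, I would sum first over $j_d$, which runs from the boundary $j_d = J - \sum_{i<d} j_i/(1-\lambda)$ of $\mathcal{I}_J^\lambda$ to $\infty$; since $\alpha-\beta<0$ this inner geometric series is controlled by its lower endpoint, after which the remaining $(d-1)$-fold outer sum has summand $\prod_{i<d} 2^{j_i[(\lambda\beta-\alpha)/(1-\lambda)]}$. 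The analogous separation in the corner region leaves $d-1$ geometric series starting at $J^\star$ and one outer series in the maximizing coordinate; a short algebraic check shows that the resulting bound $2^{J^\star(\alpha-d\beta)}$ is dominated by $2^{-J(\beta-\alpha)}$ precisely when $\lambda\le \alpha/\beta$, i.e.\ exactly under the hypothesis $\lambda\in[0,\frac{s_1-s_2}{t_2-t_1}]$.

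The main obstacle is the polylogarithmic factor $J^{d-1}$ at the critical value $\lambda = (s_1-s_2)/(t_2-t_1)$. At this value the exponent $(\lambda\beta-\alpha)/(1-\lambda)$ governing each of the $d-1$ subordinate directions in every wing region vanishes simultaneously, so that each of those geometric sums degenerates to a linear sum of length $\sim J$. The wing contribution therefore picks up exactly the factor $J^{d-1}$, while the corner contribution remains $\lesssim 2^{-J(\beta-\alpha)}$ (the inequality $\lambda\le\alpha/\beta$ holds with equality, giving no additional log). Combining the $d$ wings, the corner, and the gains from the inverse and Jackson steps yields $S\lesssim 2^{-J(\beta-\alpha)}$ for $\lambda\in[0,\alpha/\beta)$ and $S\lesssim J^{d-1}2^{-J(\beta-\alpha)}$ at $\lambda=\alpha/\beta$, which is the claim. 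The delicate point to check carefully is that the $d-1$ subordinate exponents in each wing really do become zero together — it is the coupling constraint $\|{\bs j}\|_1-\lambda\|{\bs j}\|_\infty\le J(1-\lambda)$ that ensures this, and no other parameter combination can produce a higher power of $J$.
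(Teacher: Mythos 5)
Your reduction to the sum $S=\sum_{{\bs j}\notin\mathcal{I}_J^\lambda}2^{\alpha\|{\bs j}\|_\infty-\beta\|{\bs j}\|_1}$ via the general inverse estimate and the tensorized Jackson estimate is exactly the paper's first step. From there the routes diverge: the paper splits the complement of $\mathcal{I}_J^\lambda$ into the sets $A_k$ (where $j_k$ attains the maximum), enlarges each $A_k$ to the simplicial set $B_k$ obtained by dropping the max--condition, and then \emph{cites} the generalized sparse grid estimates of \cite{SG} (with weights $\alpha_i=\frac{1}{1-\lambda}$ for $i\neq k$ and $\alpha_k=1$) to bound each contribution, including the $J^{d-1}$ factor at the critical $\lambda$. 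You instead carry out the $d$-dimensional generalization of the bivariate $A$/$B$/$C$ computation by hand, which is a legitimate and more self-contained alternative; your identification of the subordinate exponent $(\lambda\beta-\alpha)/(1-\lambda)$ and your corner estimate $2^{J^\star(\alpha-d\beta)}\le 2^{-J(\beta-\alpha)}$ for $\lambda\le\alpha/\beta$ are both correct (the latter reduces to the paper's inequality $(2t-s)\frac{1-\lambda}{2-\lambda}\ge t-s$ when $d=2$).

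One detail in your wing estimate needs more care before the claim ``each subordinate sum has length $\sim J$'' is justified. In your wing $j_k=\|{\bs j}\|_\infty$ the subordinate coordinates are only capped by $j_k$, which is unbounded, so a priori $j_i$ can exceed $J$; and if you were to drop the max--condition entirely (as the paper does when passing to $B_k$), the subordinate sums at $\lambda=\alpha/\beta$ would have exponent zero over an infinite range and diverge. The fix is to split the wing further: where $\sum_{i\neq k}j_i/(1-\lambda)\le J$ the lower limit for $j_k$ is the constraint boundary and every subordinate coordinate is indeed $\lesssim J$, giving the $J^{d-1}$; where $\sum_{i\neq k}j_i/(1-\lambda)>J$ the lower limit for $j_k$ becomes $\max_{i\neq k}j_i$ and the resulting factor $2^{(\alpha-\beta)\max_{i\neq k}j_i}$ supplies the extra geometric decay (this is the part your ``corner'' is meant to absorb, but your corner as stated only covers the region where \emph{all} coordinates exceed $J^\star$). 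With that case distinction made explicit your argument closes, and it in effect reproves the relevant special case of \cite{SG} rather than quoting it.
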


\begin{proof}
The proof is along the lines of the proof of Theorem~\ref{thm:iso-mix-conv}.
One has
\begin{align*}
  \big\|u-\widehat{\bs Q}_J^\lambda  u\big\|_{H_{\isomix}^{s_1,t_1}(\mathbb{T}^2)}
  &\le\sum_{{\bs j}\not\in \mathcal{I}_J^{\lambda}}\|{\bf Q}_{\bs j}u \|_{H_{\isomix}^{s_1,t_1}(\mathbb{T}^2)}\\
  &\lesssim\sum_{{\bs j}\not\in \mathcal{I}_J^{\lambda}}
  	2^{(s_1-s_2)\|{\bs j}\|_{\infty}}\|{\bf Q}_{\bs j}u\|_{H_{\isomix}^{s_2,t_1}(\mathbb{T}^2)}\\
  &\lesssim\sum_{{\bs j}\not\in \mathcal{I}_J^{\lambda}}
  	2^{(s_1-s_2)\|{\bs j}\|_{\infty}-(t_2-t_1)\|{\bs j}\|_{1}}\|u\|_{H_{\isomix}^{s_2,t_2}(\mathbb{T}^2)}.
\end{align*}
We find
\[
 \{{\bs j}\in\mathbb{N}_0^d:\|{\bs j}\|_1-\lambda \|{\bs j}\|_{\infty} > J(1-\lambda)\}
 = \bigcup_{k=1}^d A_k
\]
with
\[
  A_k := \Bigg\{{\bs j}\in\mathbb{N}_0^d: j_k = \|{\bs j}\|_\infty \wedge
 	\sum_{\begin{smallmatrix} i=1 \\ i\not= k\end{smallmatrix}}^d 
  	j_i+(1-\lambda) j_k > J(1-\lambda)\Bigg\}.
\]
Hence,
\begin{align*}
  \big\|u-\widehat{\bs Q}_J^\lambda  u\big\|_{H_{\isomix}^{s_1,t_1}(\mathbb{T}^2)}
  &\lesssim \sum_{k=1}^d \sum_{{\bs j}\in A_k}
  2^{-(t_2-t_1)\sum_{i\not= k} j_i} 2^{((s_1-s_2)-(t_2-t_1))j_k}\|u\|_{H_{\isomix}^{s_2,t_2}(\mathbb{T}^2)}.
\end{align*}
Since there holds
\[
  A_k \subset B_k := \Bigg\{{\bs j}\in\mathbb{N}_0^d:
 	\sum_{\begin{smallmatrix} i=1 \\ i\not= k\end{smallmatrix}}^d 
  	j_i+(1-\lambda) j_k > J(1-\lambda)\Bigg\},
\]
we can further estimate 
\begin{align*}
  \big\|u-\widehat{\bs Q}_J^\lambda  u\big\|_{H_{\isomix}^{s_1,t_1}(\mathbb{T}^2)}
  &\lesssim \sum_{k=1}^d \sum_{{\bs j}\in B_k}
  2^{-(t_2-t_1)\sum_{i\not= k} j_i} 2^{((s_1-s_2)-(t_2-t_1))j_k}\|u\|_{H_{\isomix}^{s_2,t_2}(\mathbb{T}^2)}.
\end{align*}
The error contributions in each of the simplicial index sets $B_k$ 
corresponds to the error of a generalized sparse grid with weights
$\alpha_i = \frac{1}{1-\lambda}$ for $i\not= k$ and $\alpha_k = 1$,
compare \cite{SG}. Thus, the $k$-th error contributions can be 
estimated by using \cite{SG}, leading to the desired bound. In 
particular, the polylogarithmic factor $J^{d-1}$ appears only in 
the case when the error contributions are identical along the 
boundary of the index set $\mathcal{I}_J^\lambda$, which is 
only the case for $\lambda = \frac{s_1-s_2}{t_2-t_1}$.
\end{proof}

\subsection{Computation of the sparse grid kernel interpolant}
We should finally comment on the computation of the kernel interpolant 
$\widehat{\bs Q}_J^\lambda  u$. We may employ the \emph{sparse grid 
combination technique} as introduced in \cite{GSZ,Smolyak}.
To this end, we use the identity
\begin{equation}\label{eq:combiformel}
\widehat{\bs Q}_J^{\lambda} = \sum_{{\bs j}\in\mathcal{I}_J^{\lambda}}
c_{\bs j} {\bs P}_{\bs j},
\quad\text{where }
c_{\bs j}:=\sum_{\begin{smallmatrix}{\bs j}'\in\{0,1\}^d:\\
{\bs j}+{\bs j}'\in\mathcal{I}_J^{\lambda}\end{smallmatrix}}
(-1)^{\|{\bs j}'\|_1},
\end{equation}
compare \cite{D1,D2}. Hence, the sought sparse grid kernel 
interpolant $\widehat{\bs Q}_J^{\lambda} u$ is composed by the
tensor product kernel interpolants $u_{\bs j} := {\bs P}_{\bs j} u$ 
from the different full tensor product spaces ${\bs V}_{\bs j}$ with
$c_{\bs j}\not= 0$. Each of the tensor product kernel interpolants 
$u_{\bs j}$ can now be computed (completely in parallel) by solving 
the linear system \eqref{eq:systemdD} of equations. We emphasize 
that the combination technique does not introduce an additional 
consistency error for the problem under consideration, 
see \cite{GHM} for a proof.

\section{Conclusion}\label{sec:conclusio}
In the present article, we have shown that the approximation 
rate for kernel interpolation in $H_{\mix}^p(\mathbb{T}^d)$ with
respect to optimized sparse grids, i.e.
\begin{equation}\label{eq:final}
  \big\|u-\widehat{\bs Q}_J^\lambda  u\big\|_{H_{\isomix}^{s_1,t_1}(\mathbb{T}^d)}
  \lesssim 2^{-J((t_2-t_1)-(s_1-s_2))}\|u\|_{H_{\isomix}^{s_2,t_2}(\mathbb{T}^d)},
\end{equation}
is dimension independent for the choice $0<\lambda
<\frac{s_1-s_2}{t_2-t_1}$ whenever $0\le s_2\le s_1\le p$ 
and $0\le t_1\le t_2\le 2p$ such that \eqref{eq:inclusions} 
holds and $u\in H_{\mix}^p(\mathbb{T}^d)$ 
is sufficiently smooth. Nevertheless we like to emphasize that 
the generic constant which is involved in this error estimate 
still depends (exponentially) on the dimension $d$. 

The result \eqref{eq:final} carries over straighforwardly to 
\emph{quasi-uniform} point sets
\[
  X_0 \subset X_1 \subset X_2\subset \cdots\subset \mathbb{T}
\]
instead of equidistant point sets, where the cardinality 
of the point sets 
\[
X_j := \{x_{j,k}:k\in\Delta_j\}
\]
satisfies $|\Delta_j|\sim 2^j$. The notion quasi-uniform means in the
present context that the \emph{fill distance} satisfies
\[
h_j:= \sup_{x\in\mathbb{T}} \min_{x_k\in X_j} \|x - x_k\|_2 \lesssim 2^{-j}
\]
while the \emph{separation radius} satisfies
\[
q_j := \min_{\begin{smallmatrix} x_k,x_\ell\in X_j:
\\ x_k \neq x_\ell\end{smallmatrix}} \frac{1}{2}\| x_k - x_\ell \|_2\gtrsim 2^{-j}.
\]

Our result applies moreover one-to-one to general product 
domains and the non-periodic case as described in \cite{GHM}, if the 
\emph{doubling trick} from \cite{Schaback1,Sloan} is applicable. Thanks 
to this trick we are able to exploit extra smoothness if present for the function 
to be approximated. Nonetheless, if the doubling trick does not apply, 
we still have an estimate of the form 
\[
  \big\|u-\widehat{\bs Q}_J^\lambda  u\big\|_{H_{\isomix}^{s,t}(\mathbb{T}^d)}
  \lesssim 2^{-J((p-t)-s)}\|u\|_{H_{\mix}^{p}(\mathbb{T}^d)}
\]
whenever $0<s$, $0\le t$, and $s+\frac{t}{d}\le p$. 
Hence, in general, no logarithm appears in the approximation 
rate for kernel interpolation if we measure the error with respect 
to an isotropic Sobolev space being different from $L^2(\mathbb{T}^d)$ 
for the choice $0<\lambda<\frac{s}{t}$.

In case of the Schr\"odinger equation, one has the product of 
three-dimensional one-particle spaces instead of just one-dimensional 
spaces like in our article here and $s_1 = 1$, $t_1 = 0$, and $s_2 = 1$, 
while $\frac{1}{2}\le t_2\le 1$ depends on the particular symmetry behaviour 
of the wavefunctions, see~\cite{Meng,Harry2} for the details. This means 
that the usual sparse grid space (i.e., the space $\widehat{\bs V}_J^\lambda$ 
with $\lambda = 0$, now on three-dimensional particle spaces) would be
optimal, which results in (poly-) logarithmic factors in the number of 
degrees of freedom as well as in the rate of approximation. Note that 
the antisymmetry of the wavefunctions can be built into the kernel 
interpolant in accordance with \cite{antisymmtry}. 

\section*{Acknowledgments}
Michael Griebel was supported by the \emph{Hausdorff Center for Mathematics} 
(HCM) in Bonn, funded by the Deutsche Forschungsgemeinschaft (DFG, German 
Research Foundation) under Germany's Excellence Strategy -- EXC-2047/1 -- 
390685813 and by the CRC 1720 \emph{Analysis of criticality: From complex phenomena
to models and estimates} -- 539309657 of the Deutsche Forschungsgemeinschaft. Helmut Harbrecht was 
funded in parts by the Swiss National Science Foundation (SNSF) through the 
Vietnamese-Swiss Joint Research Project IZVSZ2\_229568.

\end{document}